\newcommand{\f}{\frac}
\newcommand{\ind}[1]{\mathbf{1}{\{ #1 \}}}
\newcommand{\T}{\mathcal{T}}
\renewcommand{\root}{\rho}
\newcommand{\icx}{\preceq_{\ICX}}
\renewcommand{\phi}{\varphi}
\DeclareMathOperator{\ICX}{icx}
\DeclareMathOperator{\var}{var}
\newtheorem{theorem}{Theorem}
\newtheorem{lemma}{Lemma}[section]
\newtheorem{proposition}[theorem]{Proposition}
\newtheorem{corollary}[theorem]{Corollary}
\newtheorem*{GP}{Theorem 3.4 \cite{tree}}
\theoremstyle{remark}
\newtheorem{remark}[lemma]{Remark}
\theoremstyle{definition}
\title{Parking on supercritical Galton-Watson trees}
	\author[R.~Bahl]{Riti Bahl}
	\email{\texttt{rb9956@bard.edu}}
	\author[P.~Barnet]{Philip Barnet}
	\email{\texttt{pb7734@bard.edu}}
	\author[M.~Junge]{Matthew Junge}
	\email{\texttt{mjunge@bard.edu}}
\thanks{All three authors were partially supported by NSF DMS Grant \#1855516.}
\begin{document}
\maketitle

\begin{abstract}
At each site of a supercritical Galton-Watson tree place a parking spot which can accommodate one car. Initially, an independent and identically distributed number of cars arrive at each vertex. Cars proceed towards the root in discrete time and park in the first available spot they come to. Let $X$ be the total number of cars that arrive to the root.
%
%
Goldschmidt and Przykucki proved that  $X$ undergoes a phase transition from being finite to infinite almost surely as the mean number of cars arriving to each vertex increases. 
%
We show that $EX$ is finite at the critical threshold, describe its growth rate above criticality, and prove that it increases as the initial car arrival distribution becomes less concentrated. For the canonical case that either 0 or 2 cars arrive at each vertex of a $d$-ary tree, we give improved bounds on the critical threshold and show that $P(X=0)$ is discontinuous as a function of $\alpha$ at $\alpha_c$.
\end{abstract}

\section{Introduction}

Parking, introduced over fifty years ago \cite{KW}, is a stochastic process at the intersection of probability and combinatorics. 
The \emph{parking process} on a tree $\T$ with root $\rho$ begins with a parking spot at each vertex. Initially, $\eta_v$ cars  arrive to each vertex $v \in \T$ and move  towards the root in discrete time steps. When a car arrives at an available spot, the car parks there and the spot becomes unavailable. If multiple cars arrive to the same available spot, then one is chosen uniformly at random to park there. The remaining cars continue moving towards the root. Let $X$ be the total number of cars that arrive to $\rho$. 

There has been significant progress on understanding how $X$ behaves when $\T$ is a critical Galton-Watson tree \cite{tree, chen2019parking, curien2019phase}. Less is known about the case that $\T$ is supercritical. The point of this article is to make some progress on this case and develop some machinery that might aid future work. In particular, we prove that $EX$ is finite at criticality, describe the expected growth rate of the number of arrivals,  observe that $X$ increases as the $\eta_v$ become less concentrated, and provide some concrete bounds concerning a simple case. 

Suppose that the offspring distribution of the Galton-Watson tree $\mathcal T$ is described by the nonnegative integer-valued random variable $Z$ with $E Z = \lambda >1$. Additionally, assume that the $\eta_v$ are independent and identically distributed (i.i.d.)\ as $\eta(\alpha)$, which is a family of random variables that is stochastically increasing in $\alpha = E \eta(\alpha)$. Stochastically increasing means that $P(\eta(\alpha) \geq x) \leq P(\eta(\alpha') \geq x)$ for all $x \geq 0$ and $\alpha\leq \alpha'$.
For this setting, Goldschmidt and Przykucki proved the following.

\begin{GP} There exists $\alpha_c \in (0,1)$ such that if $\alpha  < \alpha_c$, then 
\begin{align}E X = \f{\lambda - \alpha - \lambda P(X=0)}{\lambda -1},
	\end{align}
 while if $\alpha > \alpha_c$, then, conditionally on the non-extinction of the tree, $X = \infty$ almost surely.
\end{GP}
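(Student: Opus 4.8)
My plan uses four ingredients: a recursive distributional equation for $X$, a monotone coupling in $\alpha$, a self-improvement showing that $X<\infty$ a.s.\ already forces $EX<\infty$, and a zero-one law for $\{X=\infty\}$.

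First I would record the recursion. Let $u_1,\dots,u_Z$ be the children of $\rho$ and let $X_i$ be the number of cars that arrive at $u_i$. Because cars only move rootward, the subtree rooted at $u_i$ receives no outside cars and is an independent Galton--Watson tree, so conditionally on $Z$ the $X_i$ are i.i.d.\ copies of $X$ and independent of $\eta_\rho$; since exactly one car parks at $u_i$ when $X_i\ge1$, this gives the pathwise identity $X=\eta_\rho+\sum_{i=1}^{Z}(X_i-1)^+$ and hence $X\overset{d}{=}\eta+\sum_{i=1}^{Z}(X_i-1)^+$. Running the process using only the first $n$ generations produces $X^{(n)}\uparrow X$ obeying this recursion one level at a time, and an easy induction (using that $t\mapsto(t-1)^+$ is nondecreasing) shows $X^{(n)}$ is coordinatewise nondecreasing in $(\eta_v)$. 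Combined with Strassen's theorem and the hypothesis that $\eta(\alpha)$ is stochastically increasing, this makes $X=X_\alpha$ stochastically increasing in $\alpha$; in particular $\alpha\mapsto E_\alpha X$ is nondecreasing, so $\{\alpha:E_\alpha X<\infty\}$ is an interval containing $0$, and I take $\alpha_c$ to be its supremum.

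Given the recursion, the formula for $EX$ is immediate once $EX<\infty$: by Tonelli $EX=\alpha+\lambda\,E[(X-1)^+]$, and $(X-1)^+=X-\1\{X\ge1\}$ has mean $EX-1+P(X=0)$, so $EX=\frac{\lambda-\alpha-\lambda P(X=0)}{\lambda-1}$; for $\alpha<\alpha_c$ monotonicity gives $E_\alpha X\le E_{\alpha'}X<\infty$ for some $\alpha'\in(\alpha,\alpha_c)$, so the first assertion holds. The real content is the self-improvement. Set $D=(X-1)^+$ and $\psi(s)=E[s^D]$; the distributional equation gives $s\,\psi(s)=g(s)\,h(\psi(s))-(1-s)\,P(X=0)$ for $s\in[0,1]$, where $g,h$ are the generating functions of $\eta,Z$. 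If $X<\infty$ a.s.\ then $\psi(1^-)=1$; writing $s=1-v$ and $\psi(1-v)=1-u(v)$ with $u(v)\downarrow0$ and expanding with $g'(1^-)=\alpha$, $h'(1^-)=\lambda$ (both finite), the equation becomes $(1-\alpha)v-(\lambda-1)u(v)+o(u(v)+v)=v\,P(X=0)$, which forces $u(v)/v\to\frac{1-\alpha-P(X=0)}{\lambda-1}$. Hence $\psi'(1^-)=ED<\infty$ and $EX=\alpha+\lambda\,ED<\infty$.

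The transition then follows from a zero-one law. From $X=\eta_\rho+\sum_{i=1}^{Z}(X_i-1)^+$ we get $\{X=\infty\}=\bigcup_{i=1}^{Z}\{X_i=\infty\}$, so conditioning on $Z$ and using independence of the subtrees, $p_\infty:=P(X=\infty)$ satisfies $1-p_\infty=h(1-p_\infty)$; since the only fixed points of $h$ are $1$ and the extinction probability, $p_\infty\in\{0,\,P(\text{non-extinction})\}$, and because $X<\infty$ whenever the tree is finite, $p_\infty>0$ means $X=\infty$ a.s.\ conditionally on non-extinction. Thus for $\alpha>\alpha_c$ we have $E_\alpha X=\infty$ by definition of $\alpha_c$, hence $p_\infty>0$ by the self-improvement, hence $X=\infty$ a.s.\ on non-extinction, the second alternative. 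Finally, $\alpha_c<1$: since $X\ge\eta_\rho$ gives $ED\ge E[(\eta-1)^+]$, if $X<\infty$ a.s.\ then $(\lambda-1)E[(\eta(\alpha)-1)^+]\le 1-\alpha-P(X=0)\le1-\alpha$, which fails for $\alpha<1$ close enough to $1$ (whenever $P(\eta(\alpha)\ge2)>0$ for some $\alpha<1$, as for the canonical families). And $\alpha_c>0$: one shows $E_\alpha X<\infty$ for $\alpha$ small by running the generating-function recursion for $\phi_n(z)=E[z^{X^{(n)}}]$ and checking that, for some $z>1$, the $\phi_n(z)$ remain bounded uniformly in $n$ once $\alpha$ is small, the point being that $P(X^{(n)}\ge2)\ge P(\eta\ge2)$ keeps the overflow $D^{(n-1)}$ overwhelmingly zero so the nonlinearity $h$ contributes negligibly and the recursion contracts. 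I expect this last point---a uniform-in-$n$ tail bound on $X^{(n)}$ for small $\alpha$---to be the most technical; the self-improvement via the functional equation and the zero-one law are the conceptual heart.
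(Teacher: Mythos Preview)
This statement is not proved in the present paper at all: it is quoted verbatim as ``Theorem~3.4 \cite{tree}'' from Goldschmidt and Przykucki, and the paper only \emph{uses} it as input (for instance in the proofs of \thref{thm:main} and \thref{thm:icx}). So there is no proof in the paper to compare your proposal against.

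That said, your reconstruction is essentially correct and is the natural route. The recursive distributional equation you write down is exactly the paper's \eqref{eq:RDE1}; the derivation of the closed form for $EX$ by taking expectations in it is precisely how the paper obtains \eqref{eq:EX} at the end of the proof of \thref{thm:main}. Your generating-function self-improvement ($X<\infty$ a.s.\ $\Rightarrow$ $EX<\infty$) via the identity $s\psi(s)=g(s)h(\psi(s))-(1-s)P(X=0)$ and a first-order expansion at $s=1$ is clean and uses only $E\eta=\alpha<\infty$ and $EZ=\lambda<\infty$; the zero-one argument identifying $P(X=\infty)$ with a fixed point of $h$ is standard and correct.

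Two small cautions. First, your argument for $\alpha_c<1$ really needs more than ``$P(\eta(\alpha)\ge 2)>0$ for some $\alpha<1$'': what you actually use is that $(\lambda-1)E[(\eta(\alpha)-1)^+]>1-\alpha$ for some $\alpha<1$, which fails for degenerate families like $\eta(\alpha)=\Ber(\alpha)$ (where every car parks in place and $\alpha_c$ is not in $(0,1)$). Some nondegeneracy hypothesis on the family is implicit in the original statement. Second, as you yourself flag, the contraction argument for $\alpha_c>0$ is only sketched; any rigorous version will need a genuine uniform-in-$n$ control on the tails of $X^{(n)}$ (e.g.\ via an exponential moment or a supersolution to the generating-function recursion), and this is indeed where the work lies.
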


Unless stated otherwise, we let $\eta(\alpha)$ and $\alpha_c$ be as in \cite[Theorem 3.4]{tree}.  What happens when $\alpha = \alpha_c$ was left open. Our first result shows that $EX$ is finite at criticality. 
 
\begin{theorem}\thlabel{thm:main}
	For all $\alpha \leq \alpha_c$ it holds that 
	\begin{align}E X = \f{\lambda - \alpha -  \lambda P(X=0)}{\lambda -1}.\label{eq:EX}\end{align} 
\end{theorem}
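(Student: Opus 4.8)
The formula $EX = \frac{\lambda - \alpha - \lambda P(X=0)}{\lambda - 1}$ already holds for $\alpha < \alpha_c$ by the Goldschmidt–Przykucki theorem, so the content of the theorem is the case $\alpha = \alpha_c$, where what must be shown is precisely that $EX < \infty$ (once finiteness is known, the identity should follow from the same flow/mass-transport bookkeeping that yields it subcritically, extended by a limiting argument). My plan is therefore to prove finiteness of $EX$ at $\alpha_c$ by a monotone-limit argument, approximating the critical system from below by subcritical ones.

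**The approximation.** Since $\eta(\alpha)$ is stochastically increasing in $\alpha$, the whole parking process is monotone in $\alpha$: coupling all the arrival fields on a common probability space so that $\eta_v(\alpha) \le \eta_v(\alpha')$ pointwise for $\alpha < \alpha'$, the number of cars passing through any edge is nondecreasing in $\alpha$, hence $X(\alpha) \uparrow X(\alpha_c)$ as $\alpha \uparrow \alpha_c$ along a sequence. By monotone convergence $EX(\alpha) \to EX(\alpha_c)$ (allowing $+\infty$). Now apply the subcritical formula: for $\alpha < \alpha_c$,
\begin{align}
EX(\alpha) = \frac{\lambda - \alpha - \lambda P(X(\alpha)=0)}{\lambda - 1} \le \frac{\lambda - \alpha}{\lambda - 1} \le \frac{\lambda}{\lambda-1}.
\end{align}
The right-hand side is bounded uniformly in $\alpha < \alpha_c$, so $EX(\alpha_c) = \lim_{\alpha \uparrow \alpha_c} EX(\alpha) \le \frac{\lambda - \alpha_c}{\lambda - 1} < \infty$. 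This is the crux: the subcritical identity gives a uniform a priori bound, and monotonicity upgrades it to the critical case.

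**Recovering the identity.** With $EX(\alpha_c) < \infty$ in hand, I would re-derive \eqref{eq:EX} at $\alpha_c$. One route: pass to the limit in the identity itself, which requires $P(X(\alpha)=0) \to P(X(\alpha_c)=0)$ as $\alpha \uparrow \alpha_c$; this is where I expect to spend the most care, since a priori $P(X=0)$ could jump. Under the monotone coupling, $\{X(\alpha_c) = 0\} \subseteq \bigcap_\alpha \{X(\alpha)=0\}$ up to the obvious containment direction, and in fact $\{X(\alpha) = 0\} \downarrow \{X(\alpha_c)=0\}$ along the sequence is false in general — the decreasing intersection is $\{X(\alpha)=0 \text{ for all } \alpha < \alpha_c\}$, which could strictly contain $\{X(\alpha_c)=0\}$. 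So rather than relying on left-continuity of $P(X=0)$, the cleaner route is to establish \eqref{eq:EX} directly at $\alpha_c$ via the mass-transport / flow argument that underlies the original proof: condition on the root having $Z$ children, track the net flux of cars across the root edge, and take expectations — every term is now integrable because $EX(\alpha_c)<\infty$, so the manipulation that was valid subcritically goes through verbatim.

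**Main obstacle.** The one genuinely delicate point is justifying that the flow/mass-transport identity is not merely an inequality at criticality: subcritically, the finiteness of $X$ a.s. and the recursive structure let one equate "cars in" and "cars out plus cars parked" with no leakage to infinity, and one must confirm that finiteness of $EX$ (hence $X<\infty$ a.s.) at $\alpha_c$ suffices to rule out any anomalous behavior in the limit — in particular that the expected number of cars that are "emitted" upward from the root's subtree equals $EX$ with no defect. I expect this to follow from $EX(\alpha_c) < \infty$ together with the branching-process recursion $X \overset{d}{=} \big(\eta + \sum_{i=1}^{Z} X_i - Z\big)^+$ applied at the root, taking expectations and using $E Z = \lambda$; the finite-expectation bound is exactly what makes all the $\pm$ terms separately integrable so the $(\cdot)^+$ can be handled.
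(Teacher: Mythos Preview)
Your approach is different from the paper's and the core idea is sound, but there is a real gap in the crucial step. The paper never couples the processes for different $\alpha$; instead it derives the closed formula
\[
EX_{n+1} = \bigl(G_n(\alpha) - F(\alpha)\bigr)\lambda^{n+1} + C(\alpha), \qquad G_n(\alpha)=\sum_{i=0}^n \lambda^{-i}P(X_i=0),
\]
shows $EX<\infty$ iff $G(\alpha)=F(\alpha)$, proves that each $q_n=P(X_n=0)$ is continuous in $\alpha$ (via a lemma that the law of $\eta(\alpha)$ is continuous in $\alpha$), and then observes that $G_n\to G$ \emph{uniformly}, so $G$ is continuous and $G(\alpha_c)=F(\alpha_c)$ by taking a left limit of the subcritical identity $G\equiv F$. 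Your route---monotone coupling plus the uniform bound $EX(\alpha)\le \lambda/(\lambda-1)$ from the subcritical formula---is more direct and avoids the $EX_n$ bookkeeping, but it loses the side benefits the paper extracts from that formula (the growth rate of $EX_n$ and the characterization of $\alpha_c$ via $\tau$).

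The gap is the word ``hence'' in ``hence $X(\alpha)\uparrow X(\alpha_c)$.'' A monotone coupling gives only that $X(\alpha)$ is nondecreasing with some limit $X^\ast\le X(\alpha_c)$; monotone convergence then bounds $EX^\ast$, not $EX(\alpha_c)$. To get $X^\ast=X(\alpha_c)$ you need $\eta_v(\alpha)\to\eta_v(\alpha_c)$ a.s.\ under the coupling, i.e.\ left-continuity in law of $\eta(\alpha)$. This is true---write $\alpha=\sum_{m\ge0}P(\eta(\alpha)>m)$ with each summand nondecreasing in $\alpha$ and pass to the limit---but it is exactly the paper's Lemma~\ref{lem:continuous}, and without it your argument does not close. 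Once you add that lemma your proof goes through, and in fact both proofs ultimately rest on the same continuity-of-$\eta(\alpha)$ fact.

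Two small corrections for the closing paragraph. The recursion is $X=\eta_\rho+\sum_{i=1}^Z (X^{(i)}-1)^+$, not $(\eta+\sum X_i - Z)^+$; these differ (e.g.\ $Z=2$, $\eta=0$, $X^{(1)}=2$, $X^{(2)}=0$). And once $EX<\infty$ there is no ``leakage'' issue: apply Wald to the correct recursion to get $EX=\alpha+\lambda\bigl(EX-1+P(X=0)\bigr)$ and solve.
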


Determining the value of $\alpha_c$ remains an open problem. Let $X_n$ be the number of cars that arrive to $\rho$ up to time $n$. One consequence of the proof of \thref{thm:main} is a formula for the growth rate of $E X_n$ as well as a characterization for $\alpha_c$ in terms of the first time that a car arrives to the root.

\begin{proposition} \thlabel{prop:alpha_c}
If $\tau$ is the time that the first car arrives to $\rho$, then 
\begin{align}
\lim_{n \to \infty} \f {E X_n }{\lambda^n} = \f{\lambda}{\lambda-1} (\alpha - E \lambda^{-\tau}).\label{eq:growth}
\end{align}
Moreover,
\begin{align}
\alpha_c = \sup \{ \alpha \colon E \lambda^{-\tau}= \alpha \}.\label{eq:alpha_c}
\end{align}
\end{proposition}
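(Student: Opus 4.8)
The plan is to obtain a recursion in $n$ for $EX_n$ by splitting the tree at the root, solve it explicitly, and then read off \eqref{eq:growth} and \eqref{eq:alpha_c}.

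\emph{Step 1: a recursion for $EX_n$.} I would condition on the root having children $c_1,\dots,c_Z$ carrying pendant subtrees $\mathcal T_1,\dots,\mathcal T_Z$. Because cars only move rootward, no car ever enters $\mathcal T_i$ from outside it, so the parking process restricted to $\mathcal T_i$ (up to the moment cars leave it at $c_i$) is an independent copy of the process on a Galton--Watson tree rooted at $c_i$; write $X^{(i)}_m$ for the number of cars that have reached $c_i$ by time $m$, so $X^{(i)}_m\stackrel{d}{=}X_m$, and given $Z$ the $X^{(i)}_m$ are i.i.d. A car reaching $c_i$ at time $t$ either parks at $c_i$ --- exactly one of all the cars ever reaching $c_i$ does, namely one arriving earliest --- or continues to $\rho$, arriving there at time $t+1$. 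Hence the number of cars reaching $\rho$ through $c_i$ within $n$ steps is $(X^{(i)}_{n-1}-1)^+$, and
\[
X_n \;=\; \eta_\rho \;+\; \sum_{i=1}^{Z}\bigl(X^{(i)}_{n-1}-1\bigr)^+ .
\]
All expectations here are finite since $X_n$ is dominated by the total number of cars within distance $n$ of $\rho$. Taking expectations, using $(x-1)^+=x-\mathbf 1\{x\ge 1\}$ and the observation $\{X_m=0\}=\{\tau>m\}$, one gets
\[
EX_n \;=\; \alpha \;+\; \lambda\bigl(EX_{n-1}-P(\tau\le n-1)\bigr),\qquad n\ge 1,\quad EX_0=\alpha .
\]

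\emph{Step 2: solving the recursion and passing to the limit.} Iterating this linear recursion yields
\[
\frac{EX_n}{\lambda^n} \;=\; \alpha \;+\; \alpha\,\frac{1-\lambda^{-n}}{\lambda-1} \;-\; \sum_{j=0}^{n-1}\lambda^{-j}\,P(\tau\le j).
\]
Each term is controlled uniformly (the last sum is dominated by $\sum_{j\ge 0}\lambda^{-j}<\infty$), so the limit as $n\to\infty$ exists, and by Tonelli
\[
\sum_{j\ge 0}\lambda^{-j}P(\tau\le j)=\sum_{k\ge 0}P(\tau=k)\sum_{j\ge k}\lambda^{-j}=\frac{\lambda}{\lambda-1}\,E\lambda^{-\tau}
\]
(with the convention $\lambda^{-\infty}:=0$). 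Therefore $\lim_{n\to\infty}EX_n/\lambda^n=\frac{\lambda}{\lambda-1}\bigl(\alpha-E\lambda^{-\tau}\bigr)$, which is \eqref{eq:growth}.

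\emph{Step 3: identifying $\alpha_c$.} If $\alpha\le\alpha_c$ then \thref{thm:main} gives $EX<\infty$; since $X_n\uparrow X$ we get $EX_n\uparrow EX<\infty$, so $EX_n/\lambda^n\to 0$ and hence $E\lambda^{-\tau}=\alpha$. Thus every $\alpha\in[0,\alpha_c]$ lies in $\{\alpha:E\lambda^{-\tau}=\alpha\}$, so the supremum in \eqref{eq:alpha_c} is at least $\alpha_c$. If $\alpha>\alpha_c$, then by \cite[Theorem~3.4]{tree} the tree survives with positive probability and $X=\infty$ a.s.\ on survival, so $EX=\infty$ and $EX_n\to\infty$. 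From the recursion $EX_n\ge\lambda(EX_{n-1}-1)$, i.e.\ $e_n\ge\lambda e_{n-1}$ for $e_n:=EX_n-\tfrac{\lambda}{\lambda-1}$; choosing $N$ with $EX_N>\tfrac{\lambda}{\lambda-1}$ (possible since $EX_n\to\infty$) gives $e_n\ge\lambda^{\,n-N}e_N>0$ for $n\ge N$, so $\liminf_n EX_n/\lambda^n>0$, and by \eqref{eq:growth} this forces $E\lambda^{-\tau}<\alpha$. Hence no $\alpha>\alpha_c$ belongs to $\{\alpha:E\lambda^{-\tau}=\alpha\}$, and the supremum equals $\alpha_c$.

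\emph{Main obstacle.} The delicate part is the rigorous justification of the recursion in Step~1: that the restriction of the dynamics to $\mathcal T_i$ is a bona fide independent copy of the process (no cars enter from above; ties broken by independent fair coins), and that the count of cars reaching $\rho$ via $c_i$ within $n$ steps is \emph{exactly} $(X^{(i)}_{n-1}-1)^+$ --- every non-parking car that reaches $c_i$ by time $n-1$ does reach $\rho$ by time $n$, the unique parked car is among those counted whenever $X^{(i)}_{n-1}\ge 1$, and cars reaching $c_i$ only at times $\ge n$ contribute nothing. Everything else --- iterating the linear recursion, the Tonelli computation, and the two limiting comparisons --- is routine.
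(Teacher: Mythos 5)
Your proposal is correct and follows essentially the same route as the paper: the same root decomposition $X_n=\eta_\rho+\sum_{i=1}^Z(X^{(i)}_{n-1}-1)^+$, the same iterated linear recursion for $EX_n$, the identification $q_j=P(\tau>j)$ with a Fubini/Tonelli computation yielding $E\lambda^{-\tau}$, and the characterization of $\alpha_c$ via finiteness of $EX$. The only cosmetic difference is that you work directly with $P(\tau\le j)$ and re-derive the finiteness dichotomy inline, where the paper packages these as the functions $G,F,C$ and its Lemma 2.2.
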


One particularly simple choice for the distribution of $\eta(\alpha)$ is that it takes value $2$ with probability $\alpha/2$ and otherwise is $0$. Since at time $0$ the number of cars at each site is a Bernoulli random variable, we will refer to the parking process with this distribution as \emph{Bernoulli parking}. 
As a further simplification, we consider Bernoulli parking on $\mathbb T_d$ the infinite $d$-ary tree in which each vertex has $d$ children. This case is presented as canonical in \cite{tree, collet1983study, hu2018free}. We denote the critical threshold for this specific setting by \begin{align}
\alpha_c(d) = \text{critical value in Bernoulli parking on $\mathbb T_d$}.
\label{eq:alpha_c}
\end{align}
\cite[Theorem 3.4]{tree} tells us that $P(X=0) = 0$ for $\alpha > \alpha_c$. Extending \thref{thm:main}, we show that the probability that no cars arrive to $0$ is discontinuous at $\alpha = \alpha_c(d)$. 
\begin{proposition}\thlabel{thm:discontinuous}
For $\alpha \leq \alpha_c(d)$ in Bernoulli parking on $\mathbb T_d$ we have $P(X=0) > 0$.	
\end{proposition}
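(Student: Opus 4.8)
The plan is to exploit the recursive structure of the $d$-ary tree to obtain a self-referential inequality for $p(\alpha) := P(X=0)$ that forces $p$ to stay bounded away from $0$ all the way up to $\alpha_c(d)$. Root the tree at $\rho$ and let $T_1,\dots,T_d$ be the $d$ subtrees hanging off the children $v_1,\dots,v_d$ of $\rho$. Recall that in Bernoulli parking $\eta_\rho \in \{0,2\}$ with $P(\eta_\rho = 2) = \alpha/2$. The event $\{X=0\}$ — that no car ever reaches $\rho$ — can be described in terms of the \emph{flux} out of each subtree $T_i$: for each $i$, let $F_i$ be the number of cars that exit $T_i$ through $v_i$ toward $\rho$ \emph{after} $v_i$ has taken its own spot if needed. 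First I would write $X$ in terms of $\eta_\rho$ and the $F_i$: the cars arriving at $\rho$ are exactly the cars pushed up from the children plus the overflow of $\eta_\rho$ itself, so $X = 0$ iff $\eta_\rho \le 1$ (i.e. $\eta_\rho = 0$) and $F_i = 0$ for every $i$ — here I am using that if $\eta_\rho=0$ then $\rho$'s spot absorbs at most... actually one needs $\sum F_i + \eta_\rho \le 1$, but since the $F_i$ are integers and $\eta_\rho \in\{0,2\}$ this is $\{\eta_\rho=0\} \cap \bigcap_i \{F_i = 0\}$ together with the resolution of who parks at $\rho$; a careful bookkeeping gives $P(X=0) = P(\eta_\rho = 0)\, q^d$ where $q := P(F_i = 0)$, by independence of the subtrees.

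The key step is then to \emph{lower bound} $q$ in terms of $p = P(X=0)$ for the same $\alpha$. Here the point is that $F_i = 0$ — no car escaping $T_i$ — is implied by the stronger event that, in the parking process \emph{restricted to $T_i$ with $v_i$ as root}, no car arrives at $v_i$ and additionally $v_i$'s own arrivals are absorbed; since $T_i$ is itself an infinite $d$-ary tree, the event ``$X=0$ for the process on $T_i$'' has probability exactly $p$, and on this event no car leaves $T_i$. Hence $q \ge p$, which combined with the displayed identity yields
\begin{align}
p \;\ge\; P(\eta_\rho = 0)\, p^{\,d} \;=\; \Bigl(1 - \tfrac{\alpha}{2}\Bigr) p^{\,d}.
\end{align}
If $p > 0$ this gives $p^{\,d-1} \le \bigl(1-\tfrac{\alpha}{2}\bigr)^{-1}$, which is the wrong direction — so instead I would run the argument to \emph{prove} $p>0$ by a bootstrap: define $p_n$ for the truncated tree of depth $n$, show $p_n = (1-\tfrac\alpha2) q_n^d$ with $q_n \ge p_{n-1}$ (a car cannot escape depth-$n$ truncation if it does not escape the depth-$(n-1)$ truncation of a subtree), so $p_n \ge (1-\tfrac\alpha2) p_{n-1}^d$; then iterate from $p_0 = 1-\tfrac\alpha2 > 0$ to get $p_n \ge c^{(d^n)}$ for a constant $c=c(\alpha)\in(0,1)$ — but that tends to $0$, so this crude bound is not enough. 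The honest approach must instead compare to the \emph{fixed point}: let $g(s) = (1-\tfrac\alpha2) s^d$; since $p = P(X=0)$ satisfies $p = g(q)$ with $q \ge p$ and also $q \le$ (some quantity $\le 1$), one shows $p$ is sandwiched near the largest fixed point of an appropriate map, and $g(s)=s$ has a strictly positive root precisely when $\alpha$ is below the value where that root collides with $0$ — which one identifies with $\alpha_c(d)$ using $\cite[\text{Theorem }3.4]{tree}$ (which says $p=0$ for $\alpha > \alpha_c$) and \thref{thm:main} (which gives the exact value of $EX$, hence ties $p$ to $\alpha$ continuously on $[0,\alpha_c]$).

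Concretely, the cleanest route I would actually write up: from \thref{thm:main}, $EX = \tfrac{\lambda - \alpha - \lambda p}{\lambda - 1}$ with $\lambda = d$, so $p = 1 - \tfrac\alpha\lambda - \tfrac{\lambda-1}{\lambda}EX$; since $EX \ge 0$ always and, crucially, $EX < \infty$ for $\alpha \le \alpha_c(d)$ by \thref{thm:main}, $p$ is determined by $EX$, and it suffices to show $EX < \tfrac{\lambda-\alpha}{\lambda-1}$ strictly at $\alpha = \alpha_c(d)$, equivalently $p>0$. For this I would use the subtree recursion in expectation plus the strict inequality coming from the positive-probability event $\{\eta_v = 0 \text{ for all } v \text{ in the first few levels}\}$, which forces a macroscopic fraction of configurations to contribute $0$ to $X$; making ``a few levels'' precise and extracting a uniform-in-$\alpha$ positive lower bound is the main obstacle, and I expect to handle it by a careful coupling showing $p(\alpha) \ge p(\alpha_c(d)-) \cdot(\text{something})$ and invoking left-continuity of $p$ on $[0,\alpha_c]$ (itself a consequence of monotone convergence and \thref{thm:main}). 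The discontinuity at $\alpha_c(d)$ is then immediate: $p > 0$ on $[0,\alpha_c(d)]$ but $p = 0$ on $(\alpha_c(d),1)$ by $\cite[\text{Theorem }3.4]{tree}$.
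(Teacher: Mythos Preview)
Your proposal never actually closes the argument. You correctly isolate the recursive structure and the role of \thref{thm:main}, but each route you sketch either fails by your own admission (the inequality $p\ge(1-\alpha/2)p^d$ goes the wrong way; the truncation bound $p_n\ge c^{d^n}$ tends to $0$) or remains circular. In the ``cleanest route'', rewriting $p$ in terms of $EX$ via \thref{thm:main} merely restates the goal as the strict inequality $EX<(\lambda-\alpha)/(\lambda-1)$; the event $\{\eta_v=0\text{ on the first few levels}\}$ has positive probability but does \emph{not} force $X=0$, since cars can still reach $\rho$ from arbitrarily deep in the tree; and left-continuity of $p$ at $\alpha_c$, even if established, only yields $p(\alpha_c)=\lim_{\alpha\uparrow\alpha_c}p(\alpha)\ge 0$, not strict positivity --- and you have not shown $p(\alpha)>0$ for subcritical $\alpha$ either, so there is nothing to pass to the limit.

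The missing idea is a short contradiction argument that runs the recursion in a direction you did not try. Assume $P(X=0)=0$ at some $\alpha\le\alpha_c(d)$. From \eqref{eq:RDE1},
\[
P(X=0)\;=\;\bigl(1-\tfrac{\alpha}{2}\bigr)\bigl(P(X=0)+P(X=1)\bigr)^d,
\]
so $P(X=0)=0$ forces $P(X=1)=0$. More generally, $P(X=j)$ is expressible through \eqref{eq:RDE1} as a convolution in the values $P(X=k)$ for $0\le k\le j+1$, and once $P(X=k)=0$ for all $k<j$ each $(X^{(i)}-1)^+$ is a.s.\ at least $j-1$; a brief case check then gives $P(X=j)=0$. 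Inductively $P(X=j)=0$ for every finite $j$, hence $P(X=\infty)=1$, contradicting $EX<\infty$ from \thref{thm:main}. The point is to \emph{cascade} $P(X=0)=0$ upward through the whole support of $X$ rather than to manufacture a direct lower bound on $P(X=0)$.
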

\thref{thm:main} and \thref{thm:discontinuous} are different than what occurs when $\T$ is a critical Galton-Watson tree conditioned to be infinite. In this setting $E X = \infty$ and $P(X=0)=0$ at criticality. See the discussion of results from \cite{tree,chen2019parking, curien2019phase} following  the statements of our results. 

It was shown in \cite[Theorem 3.5]{tree} that $0.03125 \leq \alpha_c(2) \leq	 0.50$. We can use ideas from the proof of \thref{thm:discontinuous} to give a greatly improved upper bound, and a small tweak to the proof of \cite[Theorem 3.5]{tree} to slightly improve the lower bound.

\begin{proposition}\thlabel{thm:bounds}
$0.03175 <\alpha_c(2) <.08698$
\end{proposition}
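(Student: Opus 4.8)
The target is $0.03175 < \alpha_c(2) < 0.08698$ for Bernoulli parking on $\mathbb{T}_2$. The two bounds come from two different mechanisms.

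For the **upper bound**, I would leverage the characterization $\alpha_c = \sup\{\alpha : E\lambda^{-\tau} = \alpha\}$ from \thref{prop:alpha_c} (with $\lambda = d = 2$), or equivalently the observation that if $\alpha > \alpha_c$ then cars arrive to the root forever. The idea behind the "improved upper bound using ideas from \thref{thm:discontinuous}" is to find a concrete finite configuration near the root that, with positive probability, already forces infinitely many cars to pass through the root — i.e., a self-sustaining flux. Concretely, I would truncate the tree at some small depth $k$, track the distribution of the "flux" variable (number of cars exiting a subtree rooted at depth $j$) via the recursive distributional equation that governs parking on $\mathbb{T}_d$, and show that for $\alpha \geq 0.08698$ the expected flux out of a subtree is at least the flux into it — making the process supercritical. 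In practice this means setting up the one-step recursion for the generating function or the mean of the outflow $Y$ in terms of the outflows $Y_1, Y_2$ of the two child subtrees plus the local arrivals $\eta$, then bounding $E Y$ from below (e.g. by a monotone truncation to $\{0,1,2\}$-valued fluxes) and solving for the $\alpha$ at which $EY \geq$ threshold; the explicit constant $0.08698$ will pop out of this finite computation.

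For the **lower bound**, the excerpt says to use "a small tweak to the proof of \cite[Theorem 3.5]{tree}." That original argument presumably shows $\alpha_c(2) \geq 0.03125 = 1/32$ by a comparison / domination argument: if $\alpha$ is small enough, one couples the parking flux with a subcritical branching-type process and shows the total arrivals to the root are a.s. finite (equivalently, $E\lambda^{-\tau} > \alpha$, so $\alpha < \alpha_c$). The tweak to push $1/32$ up to $0.03175$ is likely a sharpening of one inequality in that chain — e.g. accounting for an event that was previously bounded crudely by $1$, or iterating the domination one more level, or using the exact value of a small-case probability rather than an estimate. I would re-derive the subcriticality condition as an inequality $f(\alpha) < 1$ for an explicit increasing function $f$, identify where slack was thrown away, tighten that single step, and re-solve to get the new numerical constant.

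The **main obstacle** is the upper bound: controlling the recursive distributional equation for the outflow of a subtree on $\mathbb{T}_2$ is genuinely delicate, since the parking map is non-linear (the $+1$ for the local spot, the min with availability) and the two child fluxes interact before hitting the local spot. The art is choosing a truncation or a monotone surrogate process that is simultaneously (i) stochastically dominated by the true flux, so that its supercriticality implies the true process's supercriticality, and (ii) simple enough that its criticality threshold is an explicitly solvable equation yielding a clean bound like $0.08698$. Verifying the domination rigorously — i.e. that capping fluxes at $2$ and discarding overflow only decreases arrivals to the root — is where I expect most of the real work to lie; the rest is bookkeeping and a short numerical check.
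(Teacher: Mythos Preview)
Your upper-bound plan departs substantially from what the paper actually does, and it is unlikely to produce the specific constant $0.08698$. The paper's argument is purely computational: by \thref{lem:iff}, $G(\alpha)-F(\alpha)>0$ implies $\alpha>\alpha_c$, and since $G_n\uparrow G$, it suffices to exhibit a pair $(n,\alpha)$ with $G_n(\alpha)-F(\alpha)>0$. The authors then use the explicit polynomial recursion for $r_{n,j}=P(V_n=j)$ (\thref{lem:q_recursion}) and the relation $q_n=(1-p)r_{n,0}$ to compute $G_n$ numerically. To make this rigorous they truncate all intermediate decimals downward (at 200 places), obtaining $g_n\le G_n$, and then verify by machine that $g_{50}(0.08698)-F(0.08698)>0$. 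There is no flux-truncation or stochastic-domination step; the constant $0.08698$ is an artifact of how far the computation was pushed, not the root of a closed-form inequality. Your proposed cap-at-$2$ surrogate process, even if the domination were verified, would discard too much mass to reach anything near $0.087$---the paper's own crude structural upper bound (percolation at distance two) only gives $\alpha_c(2)\le 1/2$.

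Your lower-bound instinct is correct but unspecific. The tweak is exactly this: in the union bound over size-$n$ connected subgraphs containing the root (counted by Catalan numbers $\le 4^n$), the original argument bounded the binomial tail $P(\mathrm{Bin}(n,p)\ge n/2)$ by $2^n p^{n/2}$, dropping the factor $(1-p)^{n-k}$. Retaining it gives instead a bound of order $(p(1-p))^{n/2}$, so the summability condition improves from $64p<1$ to $64p(1-p)<1$, i.e.\ $p<\tfrac12-\tfrac{\sqrt{15}}{8}\approx 0.015877$, yielding $\alpha_c(2)>0.03175$. That is the entire ``tweak.''
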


 The calculation for the upper bound is computer-assisted which in theory gives arbitrarily close upper bounds. Runtime with exact precision quickly becomes an issue. Truncating the decimals in our calculations allows us to compute further and still have a rigorous bound, but at the cost of some accuracy. Nonetheless, we believe that the upper bound is very close to the correct value of $\alpha_c(2)$. Allowing for rounding error, the evidence suggests that $\alpha_c(2) \approx .0863$. See the proof of \thref{thm:bounds} for more details.
It is natural to ask how $\alpha_c(d)$ changes as $d$ is increased. A straightforward generalization of \cite[Theorem 3.5]{tree}, gives that $\alpha_c(d) \approx d^{-2}$. 

\begin{proposition} \thlabel{thm:pcd}
It holds for all $d \geq 2$ that 
\begin{align}\f{1}{2 e^2} d^{-2}\leq  \alpha_c(d) \leq 2d^{-2}.\label{eq:pcd}	
\end{align}

\end{proposition}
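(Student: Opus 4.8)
\textbf{Proof proposal for \thref{thm:pcd}.}
The plan is to adapt the two one-vertex comparisons that already underlie \cite[Theorem 3.5]{tree} (the $d=2$ case) and track the $d$-dependence carefully. For the \emph{upper bound} $\alpha_c(d)\le 2d^{-2}$, I would use the recursive distributional structure of the flux of cars through a vertex. Write $Y$ for the number of cars that a generic subtree sends up to its parent, so that $Y$ has the same law as $X$, and the flux into a vertex is $\eta(\alpha)+\sum_{i=1}^d Y_i$ minus one (the local spot), truncated at zero. If $EX=\infty$ then $\alpha\ge\alpha_c(d)$, so it suffices to find $\alpha$ with a self-consistent divergence. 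The cleanest route is a first-moment / subcriticality-of-the-``parked mass'' argument: let $p=P(X=0)$ and, using $\eta(\alpha)\in\{0,2\}$ with $P(\eta=2)=\alpha/2$, set up the fixed-point equation for $p$ coming from ``no car reaches the root,'' which forces all $d$ children to both absorb their own incoming cars and emit nothing. Expanding this relation to second order in $\alpha$ (the natural small parameter, since $p\to1$ as $\alpha\to0$) shows the fixed point $p=1$ becomes unstable — equivalently $EX$ blows up — once $\alpha$ exceeds a value of order $d^{-2}$; chasing the constants gives the bound $2d^{-2}$. Concretely I would mimic the computation in the proof of \thref{thm:discontinuous}/\thref{thm:bounds}, where the upper bound for $d=2$ is obtained by iterating the one-vertex map, and observe that the leading obstruction term scales like $\alpha d^2$.

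For the \emph{lower bound} $\alpha_c(d)\ge \tfrac{1}{2e^2}d^{-2}$, I would run a comparison showing that for $\alpha$ this small the parking process is dominated by a subcritical branching-type process, so $X<\infty$ a.s.\ and hence $\alpha\le\alpha_c(d)$ cannot have forced divergence — more precisely $\alpha_c(d)$ is at least this value. The standard trick (this is the ``small tweak'' referenced for $d=2$) is: a car emitted from a subtree requires at least two cars to have arrived in that subtree (one to fill the root spot of the subtree, one to overflow), and these arrivals are themselves i.i.d.\ Bernoulli$(\alpha/2)$-in-pairs events thinned through the tree. Bounding the probability that a depth-$k$ vertex emits a car by something like $(\tfrac{\alpha}{2})\cdot(\text{number of ways})$ and summing the geometric-type series over the $d^k$ vertices at depth $k$, one gets convergence as soon as $\alpha d^2/(2)\cdot(\text{const}) < 1$; the $e^2$ loss comes from a $\binom{dk}{\cdot}\le (e\cdot)^{\cdot}$ or $d^k/k! $-type estimate needed to make the sum of emission probabilities finite. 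I would formalize ``$X<\infty$ a.s.'' via the criterion in \thref{thm:main}/\thref{prop:alpha_c}: it suffices that $E\lambda^{-\tau}>\alpha$ fails to be forced, or more directly that $EX_n$ stays bounded, which the domination gives.

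The main obstacle I anticipate is the \emph{dependence structure} in the lower-bound argument: the events ``subtree $i$ emits a car'' are not independent across $i$ once we condition on what happens near their common parent, because a car that fails to park at the parent continues upward and the parent's spot is a shared resource. The clean way around this is to over-count — bound the emitted flux by the total number of cars that ever entered the subtree minus the number of spots, ignoring the randomized tie-breaking and the fact that some cars park lower down, which only increases the emitted count and hence is a valid stochastic upper bound. This decouples the subtrees and reduces everything to a deterministic recursion on expectations, at the cost of constants (hence the non-sharp $\tfrac{1}{2e^2}$). A secondary, more bookkeeping-level obstacle is making the upper-bound expansion rigorous rather than heuristic: one must exhibit an explicit $\alpha$ (of the form $cd^{-2}$) and an explicit finite-depth certificate — e.g.\ show the one-vertex operator pushes some test quantity strictly away from the ``$EX<\infty$'' regime — so I would phrase it as: if $\alpha = 2d^{-2}$ then the expected emitted flux through a vertex, as a function of the expected emitted flux of its children, has no fixed point in $[0,\infty)$, forcing $EX=\infty$. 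All of this is routine once the decoupling is set up; the real content is choosing the comparison processes so that both constants come out as stated.
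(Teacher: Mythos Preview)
Your plan diverges from the paper on both halves, and the upper-bound half has a real gap.

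\textbf{Upper bound.} You propose to analyze the fixed point $p=P(X=0)$ of the one-vertex recursion and detect where $p=1$ loses stability. The difficulty is that this recursion is not closed in $p$: from \eqref{eq:RDE1} one gets $P(X=0)=(1-\alpha/2)\,\bigl(P(X=0)+P(X=1)\bigr)^d$, which already pulls in $P(X=1)$, and that in turn pulls in $P(X=2)$, and so on. A second-order expansion in $\alpha$ of ``the fixed-point equation for $p$'' is therefore not a self-contained calculation, and there is no reason the resulting threshold would come out as the clean constant $2d^{-2}$. Likewise, your alternative formulation --- showing the expected-flux map $m\mapsto E\bigl[(\eta+\sum_{i=1}^d Y_i-1)^+\bigr]$ has no finite fixed point at $\alpha=2d^{-2}$ --- does not work directly: the crude bound $(x)^+\ge x$ yields only $m\ge \alpha+dm-1$, which always has a small positive solution. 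The paper instead uses a \emph{percolation comparison}: look at the $d^2$ grandchildren of the root, declare a grandchild open if $\eta_v=2$ (probability $\alpha/2$), and note that an open grandchild fills both itself and its parent. Once $\alpha/2>d^{-2}$, standard percolation on $\mathbb{T}_{d^2}$ gives an infinite open cluster of fully-parked spots, along which infinitely many cars are forced to the root. This yields $\alpha_c(d)\le 2d^{-2}$ in one line with no expansion.

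\textbf{Lower bound.} Your ``emission probability of a depth-$k$ vertex, summed over $d^k$ vertices'' scheme is not what the paper does, and as written it is too vague to see why the series converges with the stated constant (the decoupling you propose --- replacing emitted flux by total cars minus total spots --- throws away the combinatorial constraint that makes the bound work). The paper's argument is combinatorial rather than probabilistic-per-vertex: if $X=\infty$, then for every $n$ there is a connected subgraph $H_n\ni\rho$ with $|H_n|=n$ and at least $\lceil n/2\rceil$ vertices carrying two cars. The number of such subgraphs on $\mathbb{T}_d$ is the generalized Catalan number, bounded by $\binom{dn}{n}\le (ed)^n$; the probability a fixed $n$-set has $\ge n/2$ doubly-occupied vertices is at most $2^n\cdot C\,(p(1-p))^{n/2}$ with $p=\alpha/2$. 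A union bound plus Borel--Cantelli then gives $X<\infty$ a.s.\ whenever $(ed)^2\cdot 4\,p(1-p)<1$, and dropping the $(1-p)$ factor yields $\alpha_c(d)\ge (2e^2)^{-1}d^{-2}$. The $e^2$ you anticipated does come from a binomial bound, but it counts \emph{subtrees containing the root}, not emission paths from depth-$k$ vertices.
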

Comparing the following result to \thref{thm:pcd}, we see that the location of the phase transition on $\mathbb T_d$ depends on more than just the mean of the car arrival distribution.

\begin{proposition}\thlabel{prop:different}
Let $\alpha'_c(d)$ be the critical threshold for the parking process on $\mathbb T_d$ with $\eta(\alpha) = 3$ with probability $\alpha /3$ and $0$ otherwise. It holds that 
$$\alpha_c'(d) \leq 3 d^{-3}.$$ Combining this with \eqref{eq:pcd} gives $\alpha'_c(d) < \alpha_c(d)$ for large enough $d$.
\end{proposition}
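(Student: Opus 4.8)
The plan is to mimic the generalization of \cite[Theorem 3.5]{tree} that produces the upper bound in \thref{thm:pcd}, but adapted to the arrival distribution $\eta(\alpha) = 3$ with probability $\alpha/3$. The idea behind such upper bounds is to find an event that forces $X = \infty$ on the non-extinction event and estimate when it has positive probability under a supercriticality-type criterion. Concretely, I would set up a recursive comparison: let $Y$ be the number of cars that an independent subtree rooted at a child of $\rho$ sends up to $\rho$ (the ``overflow''), and track a suitable functional — the simplest being $EY$ or, better, the probability that a subtree overflows at least one car. Because each vertex emits $3$ cars with probability $\alpha/3$ independently, one gets a branching-type recursion for the overflow, and the phase transition is located by when the linearization of this recursion at $0$ has mean exceeding $1$.

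The key steps, in order: (1) Define for the $d$-ary tree the random variable $R$ = number of cars passing the root of a subtree upward toward its parent, and write the distributional fixed-point equation $R \stackrel{d}{=} (\eta + R_1 + \cdots + R_d - 1)^+$ where $\eta$ is the arrival at the subtree root and $R_1,\dots,R_d$ are i.i.d.\ copies; the process is subcritical precisely when this equation has a solution with $ER < \infty$ (equivalently $X < \infty$), which by Theorem 3.4 of \cite{tree} corresponds to $\alpha < \alpha'_c(d)$. (2) Derive the necessary condition for finiteness: taking expectations formally gives $ER = (E\eta + d\, ER - 1) + P(\text{underflow})$, i.e.\ $(d-1)ER = 1 - \alpha - P(\eta + R_1 + \cdots + R_d = 0)$, and since the last probability is at most the probability all $d{+}1$ ingredients vanish — but more usefully, bound below the overflow probability to get a genuine necessary inequality. (3) The cleanest route: observe that if every vertex in the first few generations receives $3$ cars (probability $(\alpha/3)^{(\text{number of such vertices})}$, positive for any $\alpha>0$) then near the root the incoming flow grows like $(3-1) = 2$ per level times the branching $d$; more precisely, feeding $3$ cars in at each of $d$ children, each subtree overflows at least $2$ cars deterministically once it is ``full'', so the root receives at least $2d - 1 \geq 3$ for $d \geq 2$, and iterating this up a deterministic spine of full vertices shows the overflow is unbounded. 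To make the constant $3d^{-3}$ appear, run this comparison against the first-moment / many-to-one estimate exactly as in \cite[Theorem 3.5]{tree}: the expected number of ``fully-loaded'' subtrees of depth $k$ that survive and overflow is roughly $(d \cdot \alpha/3 \cdot d^2)^{k}$-type — wait, rather $(\alpha/3)\cdot d^{3}$ per generation when one accounts for the three cars needing three ancestral slots — so the threshold is where $(\alpha/3) d^3 = 1$, giving $\alpha'_c(d) \le 3 d^{-3}$.

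After establishing $\alpha'_c(d) \le 3d^{-3}$, the final claim $\alpha'_c(d) < \alpha_c(d)$ for large $d$ is immediate from \eqref{eq:pcd}: that bound gives $\alpha_c(d) \ge \tfrac{1}{2e^2} d^{-2}$, and since $3 d^{-3} = o(d^{-2})$, we have $3d^{-3} < \tfrac{1}{2e^2} d^{-2}$ for all $d$ with $d > 6 e^2$, hence $\alpha'_c(d) \le 3 d^{-3} < \tfrac{1}{2e^2}d^{-2} \le \alpha_c(d)$. I would state this explicitly with the numerical cutoff.

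The main obstacle I anticipate is step (3): making the heuristic ``three cars require three ancestral parking slots, hence a factor $d^3$'' into a rigorous first-moment bound. In the Bernoulli ($\eta \in \{0,2\}$) case of \cite[Theorem 3.5]{tree} the analogous counting is delicate because one must exhibit an explicit configuration of loaded vertices whose overflow provably reaches the root and then bound the expected number of such configurations by a subcritical branching quantity; the bookkeeping of which spots get used and which cars survive is where errors creep in. For the $\eta \in \{0,3\}$ case the combinatorics is similar but the arithmetic of ``$3k$ cars versus $k$ available spots along a path'' must be tracked carefully to land the clean constant $3$; I would likely first prove the weaker $\alpha'_c(d) \le C d^{-3}$ for some explicit $C$ (which already suffices for the comparison with \thref{thm:pcd}) and then optimize the configuration to sharpen $C$ to $3$ if the referee wants the stated constant.
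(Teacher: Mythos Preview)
Your final paragraph (the comparison $3d^{-3} < \tfrac{1}{2e^2}d^{-2}$ for $d > 6e^2$) is correct and matches the paper's intended conclusion. The problem is everything before it: you are making the upper bound much harder than it is, and none of your three steps actually delivers the inequality.

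Steps (1)--(2) set up the distributional fixed point and take expectations, but this produces an \emph{identity}, not an upper bound on $\alpha'_c(d)$; the unknown term $P(\eta + R_1 + \cdots + R_d = 0)$ cannot simply be ``bounded below'' to extract $3d^{-3}$. Step (3) is where the work should be, but it oscillates between a spine-of-full-vertices argument and a first-moment count of ``fully-loaded subtrees'', neither written down; you yourself flag that you are unsure of the bookkeeping and might only get a constant $C$ rather than $3$. The paper's argument is a direct percolation comparison with no first-moment calculation at all. The single observation you are circling is this: if a vertex $v$ has $\eta_v = 3$, then $v$, its parent, and its grandparent all become occupied (one car parks at each). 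Hence declare a depth-$3$ block ``open'' if its bottom vertex satisfies $\eta_v = 3$; blocks are open independently with probability $\alpha/3$, and the induced tree of blocks is $d^3$-ary. When $(\alpha/3)\,d^3 > 1$ this block percolation is supercritical, so almost surely there is an infinite path of occupied spots, and the standard argument from \cite[Theorem~3.5]{tree} (infinitely many cars funnel along this occupied path to the root) gives $X = \infty$. That immediately yields $\alpha'_c(d) \le 3d^{-3}$ with the sharp constant $3$, with no combinatorial optimization needed.
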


That $\alpha_c$ depends on the distribution of the $\eta(\alpha)$ is part of a more general property of the parking process. Namely, that $X$ increases as $\eta(\alpha)$ becomes less concentrated. On critical Galton-Watson trees with $\alpha$ fixed, Curien and  H\'enard proved in \cite{curien2019phase} that $\alpha_c$ decreases linearly in $\var(\eta)$ (see \eqref{eq:general_PT}.) We prove a more general, albeit less precise, result. 

Given random variables $X$ and $Y$ taking values in $[0,\infty)$, we say that $Y$ dominates $X$ in the \emph{increasing convex} order if for all bounded, increasing convex functions $\phi\colon [0,\infty) \to \mathbb R$ it holds that $E \phi(X) \leq E \phi(Y)$. Denote this ordering by $X \icx Y$. Roughly speaking, the less concentrated a distribution is, the larger it is in the increasing convex order. As the identity function is convex, when $X \icx Y$ we have $EX \leq EY$. Moreover, if $E X = E Y$, then, since $x^2$ is increasing and convex, we have $X \icx Y$ implies that $\var(X) \leq \var(Y)$. See \cite{SS} for a thorough survey of stochastic orders. We show for all Galton-Watson trees (not just supercritical) that $X$ increases in the increasing convex order when $\eta$ does. Consequently, so does $EX$. 

\begin{theorem}\thlabel{thm:icx}
Let $X$ and $X'$ denote the total number of cars that arrive to $\rho$ for the parking process on a Galton-Watson tree with car arrival distributions $\eta$ and $\eta'$, respectively. If  $\eta \icx \eta'$, then $X \icx X'.$ 
\end{theorem}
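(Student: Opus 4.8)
We prove \thref{thm:icx} by combining the recursive self-similarity of the parking process with the standard closure properties of the increasing convex order. Let $Z$ denote the offspring variable, which is common to both processes. For $n\ge 0$, let $X_n$ (resp.\ $X_n'$) be the number of cars that arrive to $\rho$ when only the cars initially placed at vertices within graph distance $n$ of $\rho$ are allowed to move, for the $\eta$-process (resp.\ the $\eta'$-process); equivalently $X_n$ is the number of arrivals at the root of the parking process run on the subtree spanned by generations $0,\dots,n$. Since adding cars to the initial configuration can only increase the number of arrivals at every vertex — the monotonicity of parking — the sequences are nondecreasing and $X_n\uparrow X$, $X_n'\uparrow X'$ almost surely, with values in $\{0,1,2,\dots\}\cup\{\infty\}$. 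Because cars only ever move toward $\rho$, the spot at a child $v_i$ of $\rho$ is contested solely by the cars originating in the subtree rooted at $v_i$, and exactly one of the cars that reach $v_i$ parks there. Splitting $\mathcal T$ at $\rho$ into its $Z$ i.i.d.\ offspring subtrees therefore yields the recursive distributional equations
\begin{align}
X_0 \stackrel{d}{=} \eta, \qquad X_n \stackrel{d}{=} \eta + \sum_{i=1}^{Z}(X_{n-1,i}-1)^+ \quad (n\ge 1),
\end{align}
where $X_{n-1,1},X_{n-1,2},\dots$ are i.i.d.\ copies of $X_{n-1}$, independent of $Z$ and of $\eta$; the same equations hold with $X'$ and $\eta'$.

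Next I would prove $X_n\icx X_n'$ for every $n$ by induction. The base case $n=0$ is the hypothesis $\eta\icx\eta'$. Assume $X_{n-1}\icx X_{n-1}'$. The map $t\mapsto (t-1)^+$ is increasing and convex, and the increasing convex order is preserved under composition with increasing convex maps, so $(X_{n-1}-1)^+\icx (X_{n-1}'-1)^+$. The order is also preserved by convolution of independent summands and by mixtures, hence by random sums whose (independent) number of summands has a fixed law; applying this with the common summation index $Z$ gives $\sum_{i=1}^Z (X_{n-1,i}-1)^+ \icx \sum_{i=1}^Z (X_{n-1,i}'-1)^+$. Adding the independent term $\eta\icx\eta'$ and invoking convolution closure once more yields $X_n\icx X_n'$. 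Each of these closure properties is classical; see \cite{SS}.

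Finally I would pass to the limit $n\to\infty$. For $[0,\infty)$-valued random variables, $X\icx X'$ can be checked on the generators $x\mapsto (x-t)^+$, $t\ge 0$; that is, $X\icx X'$ is equivalent to $E(X-t)^+\le E(X'-t)^+$ for all $t\ge 0$ \cite{SS}. Since $X_n\uparrow X$ and $X_n'\uparrow X'$ almost surely, we have $(X_n-t)^+\uparrow (X-t)^+$ and $(X_n'-t)^+\uparrow (X'-t)^+$, so monotone convergence gives $E(X-t)^+=\lim_n E(X_n-t)^+\le \lim_n E(X_n'-t)^+=E(X'-t)^+$ for every $t\ge 0$, whence $X\icx X'$. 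More abstractly, the increasing convex order is closed under almost sure monotone limits, and this is the only place the limit enters.

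The inductive step is routine bookkeeping with stochastic orders; the two points deserving care are the justification of the recursive distributional equation for the truncated process — namely that the offspring subtrees decouple because cars never move away from $\rho$, and that precisely one car parks at each child of $\rho$ that receives a car — and the handling of the possible atom of $X$ at $\infty$ in the limiting step, which the generator description via $E(X-t)^+$ together with monotone convergence makes transparent. I do not anticipate a genuine obstacle: the argument is ``recursive distributional equation, plus closure properties of $\icx$, plus a monotone limit.''
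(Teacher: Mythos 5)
Your proof is correct and follows the same overall strategy as the paper: truncate at generation $n$, prove $X_n \icx X_n'$ by induction using closure of $\icx$ under increasing convex maps, under random sums with the common independent index $Z$, and under independent convolution, and then pass to the limit. The induction itself is identical to the paper's (which cites \cite[Theorems 4.A.8 and 4.A.9]{SS} for exactly the three closure properties you use). Where you genuinely differ is the limiting step. The paper invokes closure of $\icx$ under convergence in distribution, which carries an integrability hypothesis, and therefore splits into cases according to whether $EX$ is finite, disposing of the infinite case by appealing to the dichotomy of \cite[Theorem 3.4]{tree}. You instead verify the order on the generating family $x\mapsto (x-t)^+$ and use monotone convergence along $X_n\uparrow X$. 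This is cleaner and slightly more general: there is no case split, no integrability condition, and no appeal to the supercritical phase-transition theorem, so the argument covers every Galton-Watson tree and every arrival distribution uniformly, which is what the theorem actually asserts. The one point that deserves an explicit sentence in a final write-up is the identification you flag yourself: the number of arrivals to $\rho$ by time $n$ coincides with the number of arrivals in the process restricted to the first $n$ generations, because a car starting in generation $m$ reaches a vertex in generation $j$ at time exactly $m-j$, so cars from generations beyond $n$ arrive everywhere strictly later and cannot affect the outcome. This justifies both the recursive distributional equation and the almost sure monotonicity $X_n\uparrow X$ on which your limit argument rests.
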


An equivalent stochastic order is considered in \cite{JJ} for an interacting particle system known as the frog model. Unlike parking, the number of visits to the root in this process decreases if the initial particle distribution is replaced by one with the same mean that is larger in the increasing convex order. An analogous effect occurs for the limiting shape in first passage percolation \cite{BK, Marchand}. 

We expect that a similar statement as \thref{thm:icx} holds for parking on arbitrary trees. The proof we envision would be technical and we felt would distract from the main goals of this paper. We plan to tackle this in a followup work. For now we provide a corollary, which says that Bernoulli parking gives the maximal critical threshold among all arrival distributions whose supports do not include $\{1\}$. So, our estimates in \thref{thm:bounds} and \thref{thm:pcd} hold for a large family of arrival distributions.  

\begin{corollary} \thlabel{cor:max}
	If $\alpha_c$ is the critical value for Bernoulli parking on a Galton-Watson tree $\T$, then $\alpha_c' \leq \alpha_c$ with $\alpha_c'$ the critical value for parking on $\T$ with any other family $\eta'(\alpha)$ of car arrival distributions satisfying the hypotheses of \cite[Theorem 3.4]{tree} and whose support does not include $\{1\}$.
\end{corollary}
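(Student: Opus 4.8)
The plan is to fix the common mean $\alpha$, compare the two car-arrival laws in the increasing convex order, and then transfer the comparison to the number of arrivals at $\rho$ using \thref{thm:icx}. Write $\eta(\alpha)$ for the Bernoulli arrival law --- value $2$ with probability $\alpha/2$ and $0$ otherwise --- and let $\eta'(\alpha)$ be any member of the competing family, which by assumption has mean $\alpha$, meets the hypotheses of \cite[Theorem 3.4]{tree}, and has support avoiding $\{1\}$. Since cars arrive in whole numbers, the last condition forces $\eta'(\alpha)$ to take values in $\{0\}\cup\{2,3,\dots\}$. The first step is to show that $\eta(\alpha)\icx\eta'(\alpha)$ for every $\alpha$. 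By the standard characterization of the increasing convex order on $[0,\infty)$ (see \cite{SS}), this amounts to the stop-loss inequality $E[(\eta(\alpha)-t)^+]\le E[(\eta'(\alpha)-t)^+]$ for all $t\ge 0$, and such a bound implies a fortiori the comparison of bounded increasing convex functionals used in the hypothesis of \thref{thm:icx}.

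The computation I have in mind is short. One checks directly that $E[(\eta(\alpha)-t)^+]=(\alpha/2)(2-t)^+$. For the right-hand side I would invoke the pointwise inequality $(s-t)^+\ge \tfrac s2(2-t)$, valid for every $s\in\{0\}\cup[2,\infty)$ and every $t\in[0,2]$: it is an equality at $s=0$, and for $s\ge 2$ the difference equals $t\bigl(\tfrac s2-1\bigr)\ge 0$. Substituting $s=\eta'(\alpha)$ and taking expectations gives $E[(\eta'(\alpha)-t)^+]\ge\tfrac{2-t}{2}E[\eta'(\alpha)]=(\alpha/2)(2-t)$ for $t\in[0,2]$, and the inequality is trivial for $t>2$ since the left side is nonnegative. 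This is precisely where the hypothesis on the support of $\eta'$ is used: it puts all the positive mass of $\eta'(\alpha)$ at $2$ or above, which is exactly the regime in which the pointwise inequality holds. With $\eta(\alpha)\icx\eta'(\alpha)$ in hand, \thref{thm:icx} yields $X\icx X'$, where $X$ and $X'$ count the cars reaching $\rho$ under the two processes with common mean $\alpha$, and hence $EX\le EX'$ because the identity function is increasing and convex.

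To conclude I would argue by contradiction. Suppose $\alpha_c'>\alpha_c$ and pick $\alpha$ with $\alpha_c<\alpha<\alpha_c'$. Then Bernoulli parking is supercritical at $\alpha$, so $X=\infty$ almost surely on the event of non-extinction; since $\lambda>1$ the tree survives with positive probability, so $EX=\infty$. On the other hand $\alpha<\alpha_c'$, so \cite[Theorem 3.4]{tree} applied to the family $\eta'$ gives $EX'=\f{\lambda-\alpha-\lambda P(X'=0)}{\lambda-1}<\infty$. This contradicts $EX\le EX'$, so $\alpha_c'\le\alpha_c$. I do not expect a real obstacle here: the entire substance of the corollary is \thref{thm:icx}, and what remains is the elementary stop-loss estimate above, whose only delicate aspect is checking that $\eta'(\alpha)$ carries no mass in the open interval $(0,2)$ --- the one place where integer-valuedness and the exclusion of $1$ both enter.
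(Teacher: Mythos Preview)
Your proposal is correct and follows essentially the same approach as the paper: reduce to showing $\eta(\alpha)\icx\eta'(\alpha)$ for each fixed $\alpha$, then invoke \thref{thm:icx}. The only difference is cosmetic---the paper verifies the icx comparison by applying Jensen's inequality and a secant-line bound to a generic increasing convex $\phi$ with $\phi(0)=0$, whereas you use the equivalent stop-loss characterization via the pointwise inequality $(s-t)^+\ge\tfrac{s}{2}(2-t)$ on $\{0\}\cup[2,\infty)$; both arguments exploit exactly the same convexity fact and the same support hypothesis on $\eta'$.
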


\subsection{Discussion}

Parking dynamics were introduced by Konheim and Weiss for $\T = [1,n]$ the path on $n$ vertices and $\rho =1$ \cite{KW}. They fixed a parameter $\alpha \in (0,1]$ and placed $\lceil \alpha n \rceil$ cars uniformly at random on $[1,n]$. Let $A_n$ be the event that every car parks. Their main result was an asymptotic formula for the probability a given configuration is a parking function
\begin{align}\lim_{n \to \infty} P(A_n) = (1-\alpha) e^{\alpha}.\label{eq:KW}
\end{align}
There has since been significant followup study of the combinatorial structures that arise from parking functions. See the work of Stanley  and Pitman \cite{stanley1, stanley2, stanley3} as well as Diaconis and Hicks \cite{persi_parking}.

Notice that \eqref{eq:KW} is never equal to zero. Lackner and Panholzer showed that there is a phase transition when $\T$ is a uniformly random tree on $n$ vertices and $\lceil \alpha n \rceil$ cars are placed uniformly at random throughout the vertices. Again letting $A_n$ be the event that every car parks, they proved that $P(A_n)$ has limiting behavior 
\begin{align}\lim_{n \to \infty} P(A_n) = \begin{cases}  
		\f{\sqrt{1- 2 \alpha}}{1-\alpha}, & 0\leq \alpha \leq  1/2\\
		0, & \alpha \geq 1/2  
	\end{cases} \label{eq:LP}.
\end{align}

Goldschmidt and Przykucki studied the natural limiting case of \cite{uniform_parking} in \cite{tree}. They let $\T$ be a  Galton-Watson tree with a Poisson with mean $1$ offspring distribution conditioned to be infinite. Each $\eta_v$ is an independent Poisson random variable with mean $\alpha$. For $A$ the event that every car parks, they showed that $P(A)$ has the same formula as \eqref{eq:LP}. Furthermore, they deduced the main theorem of \cite{uniform_parking} as a corollary of their theorem on the infinite tree. 
 Recently, Chen and Goldschmidt proved a similar result for when $\T$ is the limiting tree from a sequence of uniformly random rooted plane trees. In this case, the phase transition occurs at $\alpha = \sqrt 2 -1 \approx 0.4142$, rather than $1/2$.
 
The parking process has been studied from two alternative perspectives. Jones viewed parking as a model for runoff of rainfall in \cite{jones2019runoff}. Parking can also be thought of as an interacting particle system with mobile particle (cars) and stationary particles (spots) which mutually annihilate upon colliding. This was first studied  on the integer lattice with cars performing simple random walk by Cabezas, Rolla, and Sidoravicius under the name \emph{particle-hole model} \cite{Cabezas2014}. Later Damron, Gravner, Junge, Lyu, and Sivakoff studied these dynamics on transitive unimodular graphs \cite{parking}. This is a special case of two-type diffusion-limited annihilating systems studied in the physics literature \cite{chem1, AB1} and also by mathematicians \cite{BL4}. More recently, Przykucki, Roberts, and Scott studied the parking process with cars performing simple random walk on the integers \cite{parking_on_integers}.

Returning our discussion to parking on  trees, both \cite{tree} and \cite{chen2019parking} rely on explicit formulas for the generating function of $X$ when $\T$ is an unconditioned critical Galton-Watson tree. This is made possible through a self-similarity present in the parking process on Galton-Watson trees and then additional nice properties from the underlying offspring distributions (Poisson and Geometric with mean 1 in \cite{tree} and \cite{chen2019parking}, respectively). Namely, if $Z$ is the number of children of $\rho$, then 
\begin{align}
X = \eta_\rho  + \sum_{i=1}^Z ( X^{(i)} - 1)^+,\label{eq:RDE1}
\end{align}
where the $X^{(i)}$ are i.i.d.\ copies of $X$ and $x^+ = \max( 0,x).$ Similar equations as \eqref{eq:RDE1}:
$$Y_{n+1} = (Y_n^{(1)} + Y_n^{(2)} + \cdots +Y_n^{(Z)} -1)^+,$$
 which are related to spin-glasses, are referred to as Derrida-Retaux models see \cite{collet1983study, hu2018free, Hu_2019}. The critical value for such equations and the behavior at criticality is much better understood than \eqref{eq:RDE1}.

Very recently, in \cite{curien2019phase}, Curien and H\' enard confirmed a conjecture from \cite{tree} by generalizing the phase transition results from \cite{tree} and \cite{chen2019parking} to arbitrary Galton-Watson trees whose offspring distributions have mean 1 and finite variance $\Sigma^2$. They proved that when the $\eta_v$ are i.i.d.\ with mean $\alpha$ and variance $\sigma^2$ and $\T$ is such a Galton-Watson tree conditioned to be infinite, a phase transition for $E X$ occurs when  
\begin{align}\theta:= (1-\alpha)^2 - \Sigma^2( \sigma^2 + \alpha^2 -\alpha) =0\label{eq:general_PT}.
\end{align}
For example, if the offspring distribution is Poisson with mean 1 and arrival distribution is Poisson with mean $\alpha$, solving \eqref{eq:general_PT} gives $\alpha =1/2$ as in \cite{tree}.

What happens at criticality? For the setting in \cite{tree}, Goldschmidt and Przykucki proved that  $E X$ undergoes a discontinuous phase transition on the critical Poisson Galton-Watson tree:
\begin{align}
	EX = \begin{cases} 1- \sqrt{ 1- 2\alpha} , &\alpha \leq 1/2	 \\ \infty, &\alpha >1/2
 \end{cases}.	
\end{align}
 In particular, $E X = 1$ when $\alpha = 1/2$. Similar behavior was observed for $EX$ in the setting in \cite{chen2019parking}. All of this is covered by the main theorem of \cite{curien2019phase} which implies, among other things, that 
\begin{align}
E (X-1)^+ = \begin{cases} \f{1- \sqrt{ \theta} + \alpha}{\Sigma^2}, & \theta \geq 0  \\  	
	\infty, & \theta < 0
 \end{cases} 
\end{align}
with $\theta$ defined at \eqref{eq:general_PT}.

	\begin{figure}
		\includegraphics[width = .6 \textwidth]{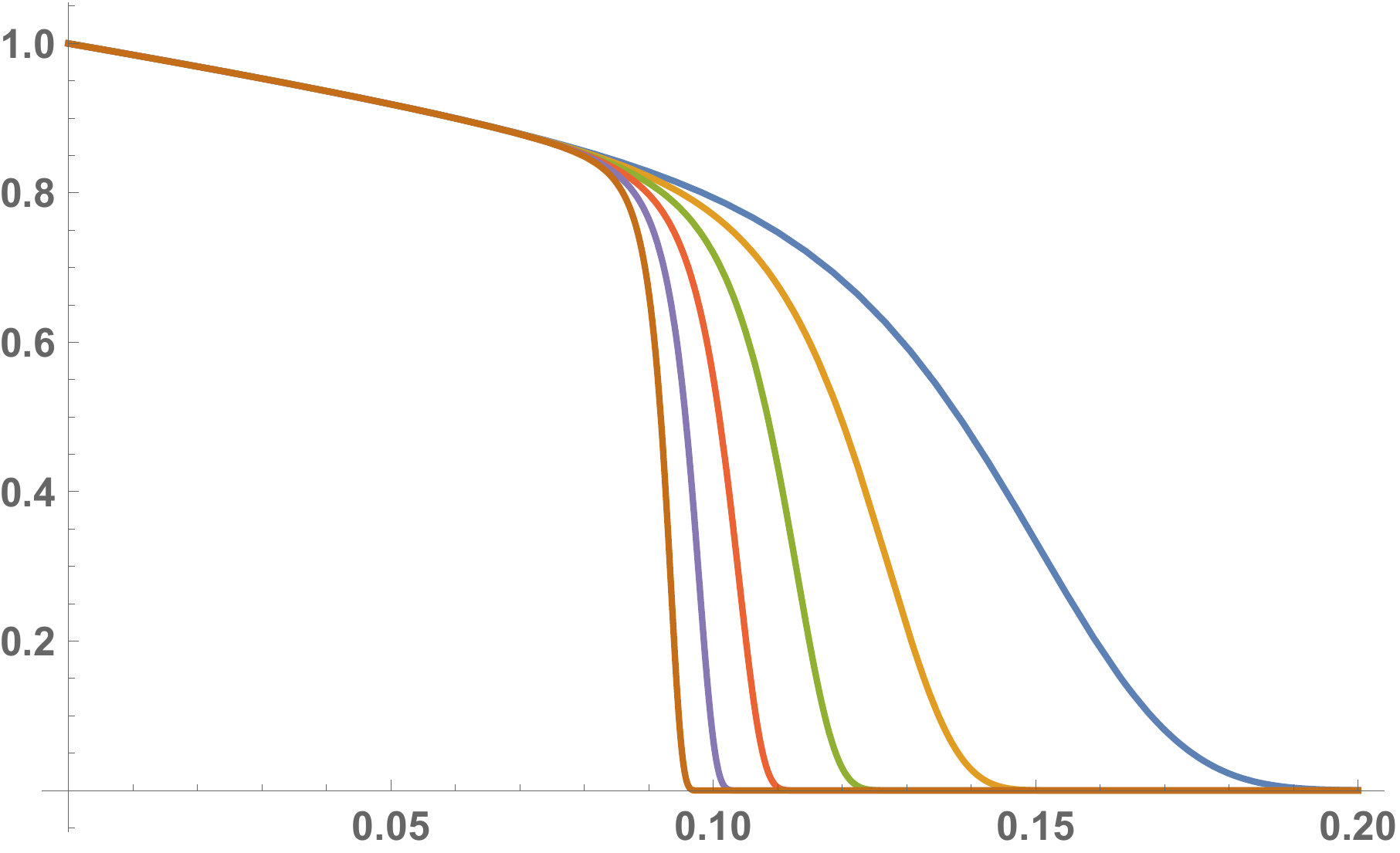}	
		\caption{Plots of $P(X_{n} = 0)$ in Bernoulli parking on $\mathbb T_2$ for $n=10,15,20,30,35,40$ (arranged right to left) and  $\alpha \in [0,0.2].$ The plotted values possibly have small floating point inaccuracies for large $n$. We do not have an explicit formula for $P(X=0)$, but these curves are increasingly accurate approximations. The fact that $P(X=0)$ is discontinuous at $\alpha_c(2)$ (\thref{thm:discontinuous}) becomes increasingly apparent.} \label{fig:q}
	\end{figure}

As remarked earlier, less is known about the phase transition on supercritical trees. It would be good to have an exact formula for $\alpha_c(d)$, and more generally for the critical threshold on supercritical Galton-Watson trees analogous to the main theorem of \cite{curien2019phase}. Even a heuristic for where the threshold should be would be nice to have. It is unclear to us if, like for critical Galton-Watson trees, the threshold only depends on the mean and variance of $\eta$. One clear difference from the formula for $\alpha_c$ at \eqref{eq:general_PT}, is that even if the variance of $Z$ is $0$, i.e. $Z \equiv d$, then the critical threshold can change. So it seems likely that, if a closed form for $\alpha_c$ exists on supercritical Galton-Watson trees, it is more involved than \eqref{eq:general_PT}. 

Although we prove that $P(X=0) >0$ for $\alpha \leq \alpha_c(d)$, we do not have a closed formula for $P(X=0)$, nor for $EX$. Nor do we have a conjecture, but in Figure \ref{fig:q} we give a few plots of $P(X_{n} =0)$ for Bernoulli parking on $\mathbb T_2$, where $X_n$ is the number of cars that arrive to $\rho$ up to time $n$. 

\subsection{Organization}
In Section \ref{sec:critical} we prove \thref{thm:main}, \thref{prop:alpha_c}, and \thref{thm:discontinuous}. Section \ref{sec:bernoulli} has the results for Bernoulli parking: \thref{thm:bounds}, \thref{thm:pcd}, and \thref{prop:different}. Section \ref{sec:icx} has the stochastic comparison results: \thref{thm:icx} and \thref{cor:max}.

\section{Critical behavior} \label{sec:critical}
Recall that $X$ is the total number of cars that arrive at $\rho$ when $\mathcal T$ is a Galton-Watson tree with offspring distribution $Z$ satisfying $EZ = \lambda >1$. The number of cars arriving to the site $v$ is $\eta_v$ which has distribution $\eta$. Let $X_n$ be the number of cars that arrive to $\rho$ up to time $n$. We let $X_0 = \eta_\rho$. Let $q_n = P(X_n=0)$.
 
Our starting point is a closed formula for $E X_{n+1}$. Define the functions
	\begin{align}G_n(\alpha) = \sum_{i=0}^n \lambda^{-i} q_i; \quad F(\alpha) &= \f{\lambda(1-\alpha)}{\lambda -1}; \quad 
				C(\alpha) = \f{1-\alpha}{\lambda-1}. \label{eq:functions}
	\end{align}	
Also, let $G(\alpha) = \lim_{n \to \infty} G_n(\alpha)$. 

\begin{proposition}\thlabel{prop:Xn} Let $G_n, F$ and $C$ be as in \eqref{eq:functions}. It holds for all $n \geq 0$ that 
	\begin{align}
EX_{n+1}
	&= \left( G_n(\alpha) - F(\alpha) \right) \lambda^{n+1} + C(\alpha). \label{eq:X_rec}
\end{align}
\end{proposition}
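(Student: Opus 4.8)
The plan is to derive the natural time-indexed version of the fixed-point equation \eqref{eq:RDE1}, pass to expectations, and then solve the resulting scalar linear recursion in closed form. Concretely, I would first prove that for every $n \geq 0$,
\begin{align*}
X_{n+1} \stackrel{d}{=} \eta_\rho + \sum_{i=1}^{Z} \bigl(X_n^{(i)} - 1\bigr)^+ ,
\end{align*}
where $Z$ is the number of children of $\rho$ and $X_n^{(1)}, X_n^{(2)}, \dots$ are i.i.d.\ copies of $X_n$ that are independent of $(Z,\eta_\rho)$. This rests on two ingredients: (i) \emph{locality} — since cars only move toward the root, the parking process restricted to the subtree $\T_i$ rooted at the $i$th child $c_i$ of $\rho$ evolves exactly as parking on $\T_i$ regarded as its own rooted tree, irrespective of the rest of $\T$, so the number of cars that have arrived at $c_i$ by time $n$ has the law of $X_n$, and by the branching property these counts are i.i.d.\ across $i$ and independent of $Z$ and $\eta_\rho$; and (ii) \emph{timing} — a car sitting at $c_i$ at time $n$ that fails to park there reaches $\rho$ at time $n+1$, and the (unique) car that does park at $c_i$ is one of those that have arrived there by time $n$, so exactly $(X_n^{(i)}-1)^+$ cars pass through $c_i$ to $\rho$ by time $n+1$. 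Adding the $\eta_\rho$ cars initially at $\rho$ gives the identity; the base case is the convention $X_0 = \eta_\rho$.

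Next I would take expectations. Conditioning on $Z$ (the summands are i.i.d., independent of $Z$, and $EZ=\lambda$) gives $EX_{n+1} = \alpha + \lambda\, E(X_n-1)^+$, and a one-line induction using $\alpha,\lambda<\infty$ shows $EX_n<\infty$ for all $n$, so all quantities are finite. Since $X_n$ is a nonnegative integer, $(X_n-1)^+ = X_n - 1 + \ind{X_n=0}$ pointwise, hence $E(X_n-1)^+ = EX_n - 1 + q_n$, which yields the scalar linear recursion
\begin{align*}
EX_{n+1} = \lambda\, EX_n + \lambda q_n + (\alpha-\lambda), \qquad EX_0 = \alpha .
\end{align*}

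Finally I would unroll this recursion: $EX_{n+1} = \lambda^{n+1}\alpha + \sum_{j=0}^{n} \lambda^{n-j}\bigl(\lambda q_j + \alpha - \lambda\bigr)$. Here $\sum_{j=0}^{n}\lambda^{n+1-j}q_j = \lambda^{n+1}G_n(\alpha)$ by the definition of $G_n$, and $\sum_{j=0}^{n}\lambda^{n-j} = (\lambda^{n+1}-1)/(\lambda-1)$ is a finite geometric sum since $\lambda>1$. Separating the coefficient of $\lambda^{n+1}$ from the $\lambda$-free remainder and simplifying against the definitions of $F(\alpha)$ and $C(\alpha)$ in \eqref{eq:functions} produces \eqref{eq:X_rec}.

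The only nonroutine step is the first one: making the self-similarity/decoupling rigorous, including that the uniform tie-breaking among simultaneously arriving cars affects only \emph{which} car parks (not how many), and the one-step delay between a child of $\rho$ and $\rho$. Everything after that is bookkeeping with a geometric series, so I expect the distributional recursion to be the main obstacle, and would spend the bulk of the write-up there.
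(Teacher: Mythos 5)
Your proposal is correct and follows essentially the same route as the paper: derive the truncated self-similarity recursion $X_{n+1} = \eta_\rho + \sum_{i=1}^Z (X_n^{(i)}-1)^+$, take expectations via $E(X_n-1)^+ = EX_n - 1 + q_n$, and unroll the resulting linear recursion; the paper's proof is simply a terser version of this. One caveat: carrying out your final simplification honestly gives the additive constant $(\lambda-\alpha)/(\lambda-1) = C(\alpha)+1$ rather than $C(\alpha) = (1-\alpha)/(\lambda-1)$ as defined in \eqref{eq:functions} (check $n=0$: the recursion gives $EX_1 = \alpha + \lambda(\alpha-1+q_0)$, which exceeds the stated formula by exactly $1$), so the printed statement appears to carry an off-by-one in the constant; this is immaterial to every downstream use, since only the coefficient of $\lambda^{n+1}$ matters, but you should not assert that the algebra lands exactly on \eqref{eq:X_rec} with the paper's $C$.
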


\begin{proof}
The truncated analogue of \eqref{eq:RDE1} is
\begin{align}
X_{n+1} = \eta_\rho + \sum_{i=1}^Z (X_n - 1)^+,\label{eq:RDEn}
\end{align}
which follows from self-similarity of $\mathcal T$.
Taking the expected value of both sides gives
\begin{align}
E X_{n+1} &= \alpha + \lambda (E X_n - P(X_n>0))
\end{align}
Iterating the recursion yields
\begin{align}
E X_{n+1} &= \lambda^{n+1} E X_0 + \sum_{i=0}^n \lambda^i \alpha - \sum_{i=1}^{n+1} \lambda^i (1-q_{n-i+1}) ),
\end{align}
which simplifies to \eqref{eq:X_rec} after expanding the $\sum \lambda^{-i}$ terms and factoring out $\lambda^{n+1}$.
\end{proof}

\thref{prop:Xn} gives us a necessary and sufficient condition to have $E X < \infty$.

\begin{lemma}\thlabel{lem:iff}
	$G(\alpha) - F(\alpha) = 0$ if and only if $E X < \infty$.
\end{lemma}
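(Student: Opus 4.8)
The plan is to read off both directions from the closed formula \eqref{eq:X_rec}, using monotone convergence to pass from the truncated quantities $X_n$ to $X$. First note that $X_n \uparrow X$ as $n\to\infty$: the truncated process $X_n$ counts cars arriving to $\rho$ by time $n$, and each additional unit of time can only increase this count, so by monotone convergence $EX_n \uparrow EX$ (allowing the value $+\infty$). Likewise $G_n(\alpha) = \sum_{i=0}^n \lambda^{-i} q_i$ is nondecreasing in $n$ (all summands are nonnegative), so $G_n(\alpha)\uparrow G(\alpha)$, and since $0\le q_i\le 1$ and $\lambda>1$ the limit $G(\alpha)$ is finite, bounded by $\lambda/(\lambda-1)$. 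The functions $F(\alpha)$ and $C(\alpha)$ do not depend on $n$.

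Now rearrange \eqref{eq:X_rec} as
\begin{align}
G_n(\alpha) - F(\alpha) = \frac{EX_{n+1} - C(\alpha)}{\lambda^{n+1}}.
\end{align}
For the forward direction, suppose $G(\alpha) - F(\alpha) = 0$. Since $G_n(\alpha) \uparrow G(\alpha) = F(\alpha)$ and each $G_n(\alpha) \le F(\alpha)$, we get $G_n(\alpha) - F(\alpha) \le 0$ for all $n$, hence $EX_{n+1} \le C(\alpha)$ for all $n$. Letting $n\to\infty$ and using $EX_n \uparrow EX$ gives $EX \le C(\alpha) < \infty$. For the reverse direction, suppose $EX < \infty$. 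Then $EX_{n+1} \le EX < \infty$ uniformly in $n$, so $EX_{n+1} - C(\alpha)$ is bounded, and therefore $(EX_{n+1}-C(\alpha))/\lambda^{n+1} \to 0$ as $n\to\infty$ because $\lambda > 1$. Hence $G_n(\alpha) - F(\alpha) \to 0$, i.e.\ $G(\alpha) = F(\alpha)$, which is $G(\alpha) - F(\alpha) = 0$.

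The only genuine subtlety is the claim $G_n(\alpha) \le F(\alpha)$ for all $n$ used in the forward direction; this is exactly the statement that $EX_{n+1} \ge C(\alpha) = (1-\alpha)/(\lambda-1) \ge 0$, which holds because $X_{n+1}\ge 0$ and $\alpha \le 1$ (indeed $\alpha \le \alpha_c < 1$ in the regime of interest, though $\alpha\le 1$ suffices). So the argument is essentially bookkeeping around \eqref{eq:X_rec} plus two applications of monotone convergence; I do not anticipate a substantial obstacle, but care is needed to handle the possibility $EX = +\infty$ cleanly rather than manipulating it as a number — hence the phrasing throughout in terms of monotone limits and the boundedness/unboundedness dichotomy rather than algebraic identities involving $EX$.
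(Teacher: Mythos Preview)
Your argument is correct and follows essentially the same route as the paper: both read the forward direction off \eqref{eq:X_rec} via $G_n \le G = F \Rightarrow EX_{n+1}\le C(\alpha)$ together with monotone convergence; for the reverse direction the paper argues by contrapositive ($G-F>0 \Rightarrow EX_n\to\infty$), whereas your direct version---$EX<\infty$ makes $(EX_{n+1}-C)/\lambda^{n+1}\to 0$, hence $G_n-F\to 0$---is equally valid and arguably cleaner.

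One slip to fix: in your final paragraph you assert that $G_n(\alpha)\le F(\alpha)$ ``is exactly the statement that $EX_{n+1}\ge C(\alpha)$''. The sign is reversed: your own rearrangement gives $G_n-F=(EX_{n+1}-C)/\lambda^{n+1}$, so $G_n\le F$ is equivalent to $EX_{n+1}\le C$, which certainly does \emph{not} follow from $X_{n+1}\ge 0$. Fortunately this paragraph is unnecessary: under the hypothesis $G(\alpha)=F(\alpha)$ of the forward direction, $G_n\le F$ is immediate from the monotonicity $G_n\uparrow G$ you already established in paragraph~1. Simply delete the last paragraph (or replace it with that one-line remark) and the proof is complete.
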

\begin{proof}
First note that $F(\alpha), C(\alpha) >0$ for $\alpha <1$. For such $\alpha$, we must have $G(\alpha) - F(\alpha) \geq 0$. Otherwise, since $G_n \uparrow G$, we would have $G_n(\alpha) - F(\alpha) <-\delta$ for some $\delta >0$, which gives the contradiction that $E X_n \downarrow - \infty$. If $G(\alpha) - F(\alpha) =0$, then \eqref{eq:X_rec} implies that $EX \leq C(\alpha)$ for all $ n \geq 1$. The monotone convergence theorem implies that $E X_n \uparrow EX \leq  C(\alpha) < \infty $. 

If $F(\alpha) - G(\alpha) >0$ then, since $G_n\uparrow G$ is strictly increasing in $n$, we have $G_N(\alpha) - F(\alpha) = \delta$ for some $\delta >0$ and large enough $N$. The formula at \eqref{eq:X_rec} implies that $E X_n \geq \delta d^n$ for all $n \geq N$, and thus $E X = \infty$.  
\end{proof}

To describe what happens at $\alpha = \alpha_c$ we require continuity of $G$, which relies on continuity of $q_n$ in $\alpha$.  First we prove that the distribution of $\eta(\alpha)$ is continuous in $\alpha$.

\begin{lemma} \thlabel{lem:continuous}
Suppose that $(\eta(\alpha))$ is a stochastically increasing family of random variables supported on the nonnegative integers with $E \eta(\alpha) = \alpha$. It holds for all $k \geq 0$ that $P(\eta(\alpha) = k)$ is a continuous function in $\alpha$.
\end{lemma}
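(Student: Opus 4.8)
The plan is to exploit the constraints that $\eta(\alpha)$ is supported on $\mathbb{Z}_{\geq 0}$, has mean exactly $\alpha$, and is stochastically increasing in $\alpha$. Write $p_k(\alpha) = P(\eta(\alpha) = k)$ and $S_k(\alpha) = P(\eta(\alpha) \geq k) = \sum_{j \geq k} p_j(\alpha)$. Stochastic monotonicity says each $S_k$ is nondecreasing in $\alpha$; since $p_k = S_k - S_{k+1}$, it suffices to show every $S_k$ is continuous in $\alpha$. A nondecreasing function can only fail to be continuous by having a jump, so the strategy is: first show $S_1$ (equivalently $p_0 = 1 - S_1$) is continuous, then bootstrap to all $S_k$, and the key leverage is the identity $\sum_{k \geq 1} S_k(\alpha) = E\eta(\alpha) = \alpha$, whose right-hand side is manifestly continuous (indeed linear) in $\alpha$.

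First I would handle $S_1$. Suppose toward contradiction that $S_1$ jumps at some $\alpha_0$, say $\lim_{\alpha \uparrow \alpha_0} S_1(\alpha) = a < b = \lim_{\alpha \downarrow \alpha_0} S_1(\alpha)$ (one-sided limits exist by monotonicity; I would argue the left and right limits separately, so "jump" also covers a jump in just one direction at an endpoint of continuity). Each $S_k$ with $k \geq 1$ is monotone, hence has left and right limits everywhere; by the tail-sum identity $\sum_{k\geq1} S_k(\alpha) = \alpha$ and monotone convergence applied along a monotone sequence $\alpha \uparrow \alpha_0$ or $\alpha \downarrow \alpha_0$, one gets $\sum_{k \geq 1} S_k(\alpha_0^-) = \alpha_0 = \sum_{k \geq 1} S_k(\alpha_0^+)$. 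But $S_k(\alpha_0^-) \leq S_k(\alpha_0^+)$ for every $k$ by monotonicity, and for $k=1$ the inequality is strict by the assumed jump; summing term by term forces $\alpha_0 < \alpha_0$, a contradiction. Hence $S_1$, and therefore $p_0$, is continuous.

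The bootstrap to general $k$ is the same mechanism applied to the conditioned/shifted variable. Given that $S_1$ is continuous, $\sum_{k \geq 2} S_k(\alpha) = \alpha - S_1(\alpha)$ is continuous; since each $S_k$ for $k \geq 2$ is monotone, the identical argument (a jump in any single $S_k$, $k \geq 2$, combined with monotonicity of all the others and the now-continuous total, yields a contradiction) shows $S_2$ is continuous. Inductively, once $S_1, \dots, S_{k-1}$ are continuous, $\sum_{j \geq k} S_j(\alpha) = \alpha - \sum_{j=1}^{k-1} S_j(\alpha)$ is continuous, and the monotonicity-plus-finite-sum argument gives continuity of $S_k$. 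Then $p_k = S_k - S_{k+1}$ is continuous for every $k \geq 0$.

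The main obstacle is making the "jump forces a strict inequality in an equality of sums" step fully rigorous: one must be careful that the one-sided limits $S_k(\alpha_0^{\pm})$ exist for all $k$ simultaneously (fine, by monotonicity of each $S_k$), that the sums $\sum_k S_k(\alpha_0^{\pm})$ genuinely equal $\alpha_0$ rather than something smaller (this is where I would invoke monotone convergence along a monotone sequence $\alpha_n \to \alpha_0$, using $S_k(\alpha_n) \uparrow S_k(\alpha_0^-)$ or $\downarrow S_k(\alpha_0^+)$ and Tonelli to swap the limit with the sum over $k$), and that no mass "escapes to infinity" in $k$ — but finiteness of the mean $\alpha_0$ controls exactly that. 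Everything else is routine.
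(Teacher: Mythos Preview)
Your proof is correct and rests on the same identity as the paper's: $\sum_{k \geq 1} S_k(\alpha) = \alpha$ together with monotonicity of each $S_k$. The paper's execution is more direct: for $\alpha' < \alpha$, nonnegativity of every summand in $\alpha - \alpha' = \sum_{m\geq 0}\bigl[P(\eta(\alpha)>m) - P(\eta(\alpha')>m)\bigr]$ immediately yields $|P(\eta(\alpha)>m) - P(\eta(\alpha')>m)| \leq |\alpha - \alpha'|$, so each tail probability is $1$-Lipschitz in $\alpha$ and no contradiction or one-sided-limit argument is needed. Note also that your contradiction step for $S_1$ already works verbatim for any fixed $S_k$ (a jump in $S_k$ alone forces $\alpha_0 < \alpha_0$), so the inductive bootstrap is unnecessary.
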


\begin{proof}
Let $\alpha' < \alpha$. Since $\alpha = E \eta(\alpha) = \sum_{m \geq 0} P(\eta(\alpha) > m)$, we  write
$$\alpha - \alpha ' = E \eta(\alpha) - E \eta(\alpha') = \sum_{m=0}^\infty [ P(\eta(\alpha) > m) - P(\eta(\alpha') > m)].$$
Because $(\eta(\alpha))$ is stochastically increasing, each summand is positive. Thus, $P(\eta(\alpha) > m) - P(\eta(\alpha') >m) \leq \alpha - \alpha'$, which can be made arbitrarily small. It follows that $P(\eta(\alpha) > m)$ is continuous in $\alpha$ for all $m \geq 0$. This implies that $1- P(\eta(\alpha) >m) = \sum_{k=0}^m P(\eta(\alpha) = k)$ is also continuous. Iteratively applying this fact for $m=0,1,\hdots$ gives that $P(\eta(\alpha) = k)$ is continuous for all $k \geq 0$. 
\end{proof}

\begin{lemma} \thlabel{lem:q_cont}
$q_n$ is continuous in $\alpha$ for all $n\geq 0$. 	
\end{lemma}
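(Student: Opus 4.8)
The plan is to prove by induction on $n$ the stronger assertion that $\alpha \mapsto P(X_n = k)$ is continuous for \emph{every} $k \geq 0$; continuity of $q_n = P(X_n = 0)$ is then the special case $k = 0$. The base case $n = 0$ is immediate, since $X_0 = \eta_\rho$ gives $P(X_0 = k) = P(\eta(\alpha) = k)$, which is continuous in $\alpha$ by \thref{lem:continuous}.

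For the inductive step I would use the self-similar recursion \eqref{eq:RDEn}, $X_{n+1} = \eta_\rho + \sum_{i=1}^Z (X_n^{(i)} - 1)^+$, in which $Z$, $\eta_\rho$, and the i.i.d.\ copies $X_n^{(i)}$ of $X_n$ are mutually independent. First note that $(X_n - 1)^+$ has law $P((X_n - 1)^+ = 0) = P(X_n = 0) + P(X_n = 1)$ and $P((X_n - 1)^+ = \ell) = P(X_n = \ell + 1)$ for $\ell \geq 1$, so by the induction hypothesis each of these probabilities is continuous in $\alpha$. For a fixed offspring count $z$, the law of $\sum_{i=1}^z (X_n^{(i)} - 1)^+$ is the $z$-fold convolution of this law, so $P\bigl(\sum_{i=1}^z (X_n^{(i)} - 1)^+ = m\bigr)$ is a finite sum of finite products of the continuous functions above, hence continuous in $\alpha$. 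Summing over the offspring count,
\begin{align}
P\Bigl(\sum_{i=1}^Z (X_n^{(i)} - 1)^+ = m\Bigr) = \sum_{z \geq 0} P(Z = z)\, P\Bigl(\sum_{i=1}^z (X_n^{(i)} - 1)^+ = m\Bigr),
\end{align}
an infinite series whose $z$-th term is continuous and bounded by $P(Z = z)$; since $\sum_z P(Z = z) = 1 < \infty$, the Weierstrass $M$-test gives uniform convergence and hence continuity of the sum in $\alpha$. Finally, conditioning on $\eta_\rho$,
\begin{align}
P(X_{n+1} = k) = \sum_{j=0}^k P(\eta(\alpha) = j)\, P\Bigl(\sum_{i=1}^Z (X_n^{(i)} - 1)^+ = k - j\Bigr),
\end{align}
which is a finite sum of products of continuous functions (using \thref{lem:continuous} for the factors $P(\eta(\alpha) = j)$), hence continuous. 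Taking $k = 0$ gives continuity of $q_{n+1}$ and completes the induction.

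The only delicate step is passing through the random offspring count $Z$, and this is handled by the uniform bound $\sum_z P(Z = z) = 1$ together with the $M$-test; everything else is a finite manipulation. The one structural point worth flagging is the choice of induction hypothesis: one cannot induct on continuity of $q_n$ alone, because $q_{n+1}$ depends on $X_n$ through $(X_n - 1)^+$, hence on $P(X_n = 1)$, which in turn forces tracking $P(X_n = 2)$, and so forth — so the statement must be strengthened to all $k$ from the start.
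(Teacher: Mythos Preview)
Your proof is correct and takes a genuinely different route from the paper's. The paper argues directly for $q_n$ alone: it conditions on the finite tree $\mathcal{T}_n$ of vertices within distance $n$ of the root, truncates by the event $\{|\mathcal{T}_n| \leq N\}$, bounds the complementary probability by $\lambda^n/N$ via Markov's inequality, and then observes that on $\{|\mathcal{T}_n| \leq N\}$ the event $\{X_n = 0\}$ forces every $\eta_v \leq N$, so $P(X_n = 0,\ |\mathcal{T}_n| \leq N)$ is a finite polynomial in the probabilities $P(\eta = k)$ for $k \leq N$, hence continuous by \thref{lem:continuous}. Your inductive argument instead exploits the recursion \eqref{eq:RDEn} directly and handles the unbounded offspring count with the Weierstrass $M$-test. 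Your approach yields the stronger conclusion that every $P(X_n = k)$ is continuous, and your remark that this strengthening is forced by the recursion (since $q_{n+1}$ already depends on $P(X_n = 1)$) is exactly right. The paper's truncation argument, by contrast, avoids induction and is closer in spirit to a finite-configuration coupling; it would adapt more readily to events about $X_n$ that are not expressible through the one-step recursion.
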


\begin{proof}
Let $\mathcal T_n$ denote the subset of $\mathcal T$ containing all vertices within distance $n$ of $\rho$. Fix $N >0$ and partition
\begin{align}q_n &= P(X_n =0, |\T_n| \leq N) + P(X_n =0, |\T_n| > N)	\\
				&\leq P(X_n =0, |\T_n| \leq N) + P(|\T_n| > N).	\label{eq:q_breakdown}
\end{align}
Using Markov's inequality we have $P(|\T_n| > N ) \leq \lambda^n / N$, and can be made arbitrarily small for fixed $n$. 

Observe that there are finitely many trees $\T_n \leq N$, and the event $X_n=0$ requires that all $\eta_v \leq N$ with $v \in \T_n$, otherwise $\rho$ would be visited. Thus, $P(X_n =0, |\T_n\| \leq N)$
is a finite sum involving only products of the probabilities $P(\eta = k)$ for $k \leq N$. By \thref{lem:continuous}, this is continuous. 
 Hence, for $\alpha \in (0,1)$ and any  $\epsilon >0$, we choose $N$ so that $P(|\T_n| >N) < \epsilon /3$ and $\delta$ so that 
$$|P( X_n(\alpha) = 0, |T_n| \leq N) - P(X_n(\alpha') = 0), |T_n| \leq N) |<\epsilon /3$$
for all $|\alpha - \alpha'| < \delta$. Here 
$X_n(\alpha)$ signifies the dependence of $X_n$ on $\alpha$. Applying this to \eqref{eq:q_breakdown} gives for all $|\alpha - \alpha'|<\delta$ we have $|q_n(\alpha) - q_n(\alpha')| < \epsilon,$ and so $q_n$ is continuous at $\alpha$. 
\end{proof}

Now we can prove that $E X$ is finite when $\alpha = \alpha_c$.

\begin{proof}[Proof of \thref{thm:main}]
By \thref{lem:iff}, it suffices to prove that $G(\alpha_c) - F(\alpha_c)	=0$. 
We claim that $G(\alpha)$ is continuous for all $\alpha \in (0,1)$. 
By our hypothesis that $P(\eta = k)$ is continuous in $\alpha$ and \thref{lem:q_cont}, the $q_n$ are continuous functions of $\alpha$. It follows that each $G_n = \sum_{i=0}^n\lambda^{-i} q_i$ is continuous. Moreover, the convergence $G_n\uparrow G$ is uniform since
$$G(\alpha) - G_n(\alpha) = \sum_{i >n} \lambda^{-i} q_i \leq \sum_{i>n} \lambda^{-i}$$
which can be made arbitrarily small for all sufficiently large $n$. A uniformly convergent sequence of continuous functions is continuous, so $G$ is continuous.  As $F$ is also continuous, it follows that $G(\alpha) - F(\alpha)$ is continuous for all $\alpha \in (0,1)$. 

	\cite[Theorem 3.4]{tree} tells us that $\alpha<\alpha_c$ implies $EX < \infty$. Thus, \thref{lem:iff} gives that $G(\alpha) -F(\alpha) =0$ for all $\alpha < \alpha_c$. Continuity of $G-F$ implies that $$G(\alpha_c)-F(\alpha_c)  = \lim_{\alpha \to \alpha_c^-} G(\alpha) - F(\alpha)= 0.$$
	Thus, $EX < \infty$ when $\alpha = \alpha_c$. The explicit formula \eqref{eq:EX} follows from taking expectation in \eqref{eq:RDE1} and solving for $EX$, which is valid whenever $EX <\infty$. 
\end{proof}
	
\begin{proof}[Proof of \thref{prop:alpha_c}]
It follows from \thref{lem:iff} that
	$$\alpha_c = \sup\{ \alpha \colon G(\alpha) - F(\alpha) = 0\}.$$
	Notice that $q_n = P(X_n = 0) = P(\tau > n)$. We then have
	\begin{align}
		G(\alpha) = \sum_{i=0}^\infty \lambda^{-i} P(\tau >i)	 = \sum_{i=0}^\infty \sum_{m>i} \lambda^{-i} [ P(\tau =m)	+ P(\tau = \infty)].
	\end{align}
Apply Fubini's theorem to the first term and separate out the $\sum_{i=0}^\infty \lambda^{-i} P(\tau = \infty)$ part to write this as
\begin{align}
G(\alpha) &= \sum_{m>0} \sum_{i=0}^{m-1} \lambda^{-i} P(\tau = m) + \sum_{i=0}^\infty \lambda^{-i} P(\tau = \infty)\\
&= \sum_{m >0} \f{ \lambda - \lambda^{-m+1} }{\lambda -1} P(\tau = m) + \f \lambda {\lambda - 1} P(\tau = \infty)\\
&= \f \lambda {\lambda -1} \left( P(\tau= \infty)+ \sum_{m>0} [P(\tau =m) - \lambda^{-m} P(\tau = m) ]   \right).
\end{align}
After grouping the $P(\tau = m)$ terms and the $\lambda^{-m} P(\tau =m)$ terms and accounting for the fact that both are missing $P(\tau =0)$, this simplifies to 
\begin{align}
G(\alpha) &=\f \lambda {\lambda -1} \left( 1 - E \lambda^{-\tau} \right). 
\end{align}
Now, subtracting $F(\alpha) = \lambda (\lambda -1)^{-1}(1-\alpha)$ and simplifying a bit gives
\begin{align}
G(\alpha) - F(\alpha) = \f \lambda {\lambda -1} \left(\alpha - E \lambda^{-\tau} \right).\label{eq:form}
\end{align}

\thref{prop:Xn} tells us that 
$$\f{E X_{n+1}}{\lambda^n} = (G_n(\alpha) - F(\alpha) ) + C(\alpha) \lambda^{-n}.$$
Taking the limit of the above and applying the equality at \eqref{eq:form} gives  \eqref{eq:growth}. As for \eqref{eq:alpha_c}, by \thref{lem:iff} we have $EX$ is finite if and only if $\alpha - E \lambda^{-\tau}=0$, thus $\alpha_c$ is the largest solution to this equation.

\end{proof}

\section{Bernoulli parking on $\mathbb T_d$} \label{sec:bernoulli}

We start with a quick proof that $P(X=0)$ is discontinuous. 

\begin{proof}[Proof of \thref{thm:discontinuous}]
Suppose that $P(X=0) >0$ for some $\alpha \leq \alpha_c$. Using \eqref{eq:RDE1} we have
	$P(X=k)$ with $k \leq j+1$ can be written as
	$$P(X=j) = P \left( \eta_\root + \sum_{i=1}^d (X^{(i)} -1)^+  = k \right).$$
	Thus, we can write $P(X=j)$ as a convolution involving $P(X=k)$ for $0 \leq k \leq j+1$. 
	 For example,
	$$P(X=0) = (1-(\alpha/2))^{d+1}(P(X=0) + P(X=1))^d.$$
	Given that $P(X=0) = 0$, we can iteratively deduce that $P(X=j) = 0$ for all $j \geq 0$. This implies that $P(X=\infty) = 1$, which contradicts that $EX < \infty$ from \thref{thm:main}. Thus, $P(X=0) >0$ when $\alpha \leq \alpha_c(d)$. 
\end{proof}

\subsection{Bounds for $\alpha_c(2)$}

We now turn our attention to improving the estimates on $\alpha_c(2)$. The idea is  generate closed forms for $q_n$ using a recursive relationship.  Let $p = \alpha /2$. For $n \geq 1$ define $V_{n}= (X_{n}-\eta_\rho)^+$ to be the number of cars that arrive to $\rho$ between time $1$ and $n$. Let $r_{n,j} = P(V_{n} = j)$. Notice we have the simple relationship
\begin{align}q_{n} = (1-p)r_{n,0}.\label{eq:qV}\end{align}
 The following lemma describes a recursion satisfied by the $r_{n,j}$.

\begin{lemma}\thlabel{lem:q_recursion}
Let $n \geq 1$. Set $r_{n,j}=0$ for $j<0$ and $j> 2^{n+1}-2$. For $n=0$,
	\begin{align}
		r_{1,0} = (1-p)^2, \quad 
		r_{1,1} = 2p(1-p), \quad 
		r_{1,2} = p^2.
	\end{align} When $j=0$ we have
	\begin{align}
		r_{n+1,0} &= (1-p)^2(r_{n,0} + r_{n,1})^2.
	\end{align}
 It holds for all $0 < j \leq 2^{n+1} -2$ that
	\begin{align}	
		 r_{n+1,j} &= p^2 \left(\sum _{k=0}^{j-2} r_{n,k} r_{n,j-k-2} \right) \\ 
				& \quad + 2 p (1-p) \left(r_{n,0} r_{n,j-1} + \sum _{k=0}^{j-1} r_{n,k} r_{n,j-n}\right)	 \\
				& \qquad + (1-p)^2\left(2r _{n,0} r_{n,j+1}+ \sum_{k=1}^{j+1} r_{n,k} r_{n,j-k+2}\right).
	\end{align}
	\end{lemma}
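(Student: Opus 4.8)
The plan is to derive the recursion for $r_{n+1,j}$ directly from the self-similar decomposition of the parking process, conditioning on $\eta_\rho$ and on the number of cars that each of the two subtrees sends up to the root. Recall that $V_{n+1} = (X_{n+1} - \eta_\rho)^+$ counts the cars arriving to $\rho$ strictly after time $0$, i.e.\ those that come from the two rooted subtrees at depth $n$. Writing $\T$ as $\rho$ together with two i.i.d.\ copies of the depth-$n$ process, each child $i\in\{1,2\}$ emits $X_n^{(i)}$ cars toward its own root, of which $(X_n^{(i)}-1)^+$ continue up to $\rho$ (one parks at the child's spot if $X_n^{(i)}\ge 1$). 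Hence $V_{n+1} = \bigl((X_n^{(1)}-1)^+ + (X_n^{(2)}-1)^+ + \eta_\rho - \1\{\eta_\rho + (X_n^{(1)}-1)^+ + (X_n^{(2)}-1)^+ \ge 1 \text{ and } \rho\text{'s spot is taken by one of these}\}\bigr)$; more cleanly, $X_{n+1} = \eta_\rho + \sum_{i=1}^2 (X_n^{(i)}-1)^+$ is exactly \eqref{eq:RDEn}, and subtracting $\eta_\rho$ and taking positive part (which only matters in the all-zero case) gives $V_{n+1}$ in terms of $\eta_\rho$ and the two independent copies of $(X_n - 1)^+$. The first task is to record the distribution of $(X_n-1)^+$ in terms of the $r_{n,k}$ and $p$: since $X_n = \eta_\rho + V_n$ with $\eta_\rho \in\{0,2\}$ independent of $V_n$, one has $P(X_n = 0) = (1-p) r_{n,0}$, $P(X_n = m) = (1-p) r_{n,m} + p\, r_{n,m-2}$ for $m\ge 1$, and therefore $P((X_n-1)^+ = 0) = (1-p)(r_{n,0}+r_{n,1})$ and $P((X_n-1)^+ = \ell) = (1-p) r_{n,\ell+1} + p\, r_{n,\ell-1}$ for $\ell \ge 1$.

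Next I would split on the value of $\eta_\rho$: with probability $1-p$ we have $\eta_\rho = 0$, so $X_{n+1} = (X_n^{(1)}-1)^+ + (X_n^{(2)}-1)^+$ and $V_{n+1} = X_{n+1}$; with probability $p$ we have $\eta_\rho = 2$, so $X_{n+1} = 2 + (X_n^{(1)}-1)^+ + (X_n^{(2)}-1)^+$ and again $V_{n+1}=X_{n+1}$ (the positive part is vacuous once $\eta_\rho\ge1$, and when $\eta_\rho=0$ the sum is already $\ge 0$). So in all cases $V_{n+1} = \eta_\rho + W$ where $W := (X_n^{(1)}-1)^+ + (X_n^{(2)}-1)^+$ is the independent sum of two copies of $(X_n-1)^+$. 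The distribution of $W$ is the convolution of the law of $(X_n-1)^+$ with itself, and then $r_{n+1,j} = (1-p) P(W = j) + p\, P(W = j-2)$. The proof is then a matter of expanding $P(W=m) = \sum_{a+b=m} P((X_n-1)^+=a)\,P((X_n-1)^+=b)$ using the two-case formula above for $P((X_n-1)^+ = \cdot)$, which produces three types of terms — $p^2$ (both subtree contributions use the $\eta$-shift-by-2), $2p(1-p)$ (one shifted, one not, with the factor $2$ from the two orderings), and $(1-p)^2$ (neither shifted) — and carefully re-indexing the convolution sums to land on the stated ranges. The base case $n=1$ is a direct computation: $X_0 = \eta_\rho \in\{0,2\}$, so $(X_0-1)^+ \in\{0,1\}$ with probabilities $1-p, p$, and $V_1 = \eta_\rho + (\text{two i.i.d. copies})$; alternatively one can just read off $X_1 = \eta_\rho + (X_0^{(1)}-1)^+ + (X_0^{(2)}-1)^+$ and note $V_1 = X_1 - \eta_\rho$ has the distribution of the sum of two $\Ber(p)$ variables, giving $(1-p)^2, 2p(1-p), p^2$.

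The main obstacle, and the only genuinely fiddly part, is the bookkeeping in the re-indexing of the three convolution sums so that they match the claimed expressions exactly — in particular sorting out the boundary contributions where one of the two $(X_n-1)^+$ values equals $0$ (which has the anomalous mass $(1-p)(r_{n,0}+r_{n,1})$ rather than fitting the generic formula $(1-p)r_{n,\ell+1}+p\,r_{n,\ell-1}$ with $\ell=0$, since the latter would wrongly include $p\,r_{n,-1}=0$ but miss the $r_{n,0}$ piece). This is exactly why the stated formula has isolated terms like $2r_{n,0}r_{n,j+1}$ and $r_{n,0}r_{n,j-1}$ sitting outside the summations, and why the sums start at $k=1$ in the $(1-p)^2$ block. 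I would handle this by writing $(X_n-1)^+$'s law as a "bulk" part valid for all $\ell \ge 0$ plus a correction supported at $\ell = 0$, convolving the bulk parts to get the clean double sums, and then adding the cross terms between the correction and the bulk (and the correction with itself) to recover the extra stand-alone terms; then the $\eta_\rho$-shift by $p$ versus $1-p$ distributes over everything. The transcription in the lemma statement appears to contain a couple of typos in the summation indices (e.g.\ "$r_{n,j-n}$" should read $r_{n,j-k}$, and the $2p(1-p)$ block's sum range should be arranged so the generic and boundary $\ell=0$ terms are not double-counted), and part of the proof's job is to state the corrected identity precisely; I would present the derivation in a way that makes the correct index ranges fall out of the convolution rather than asserting them.
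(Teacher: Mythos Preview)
Your overall plan—expressing the law of $(X_n-1)^+$ in terms of the $r_{n,\cdot}$ by conditioning on the child's own $\eta$, then convolving two independent copies—is exactly the paper's approach; the paper phrases it as ``condition on $\eta_x,\eta_y$ and partition on $k$ cars arriving at $x$,'' which amounts to the same decomposition. Your identification of the typo $r_{n,j-n}\to r_{n,j-k}$ and of the boundary-correction mechanism behind the isolated $r_{n,0}$ terms is also correct.

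There is, however, one genuine slip that breaks the derivation as written. You assert that when $\eta_\rho=2$ one has $V_{n+1}=X_{n+1}$, and hence conclude $V_{n+1}=\eta_\rho+W$ and
\[
r_{n+1,j}=(1-p)\,P(W=j)+p\,P(W=j-2).
\]
But by definition $V_{n+1}=(X_{n+1}-\eta_\rho)^+=X_{n+1}-\eta_\rho=W$, \emph{regardless} of $\eta_\rho$; your ``positive part is vacuous'' remark is right, but you then forget to actually subtract $\eta_\rho$. Thus $r_{n+1,j}=P(W=j)$ with no further shift. If you now carry out the convolution $P(W=j)=\sum_{a+b=j}f(a)f(b)$ using your (correct) formulas $f(0)=(1-p)(r_{n,0}+r_{n,1})$ and $f(\ell)=(1-p)r_{n,\ell+1}+p\,r_{n,\ell-1}$ for $\ell\ge1$, the $p^2$, $2p(1-p)$, and $(1-p)^2$ blocks fall out exactly as stated in the lemma—the factors of $p$ and $1-p$ come from the \emph{children's} arrivals $\eta_x,\eta_y$, not from $\eta_\rho$. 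With your extra $\eta_\rho$-shift you would instead produce a formula with different powers of $p$ that does not match the statement, so this step must be corrected before the re-indexing.
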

\begin{proof}
This follows from \eqref{eq:RDE1}. Label the two children of the root as $x$ and $y$. The formulas for $r_{1,j}$ come from the fact that $V_1= \ind{\eta_x = 2} + \ind{\eta_y=2}$ is a Binomial random variable.  The formula for $r_{n+1,0}$ comes from the requirement that $\eta_x, \eta_y =0$ and that no more than one car visits each of $x$ and $y$, respectively. Clearly, $r_{n,j} =0$ for $j<0$, and since the number of vertices in $\mathbb T_2$ up to distance $n$ from $\rho$ is $2^{n+1}-1$. The formula for $r_{n+1,j}$ comes from conditioning on $\eta_x, \eta_y$ and then partitioning on $k$ cars arriving at $x$. Special considerations need to be made when $k = j+1$ since, in this case, either $0$ or $1$ cars can arrive to $y$. Similarly for when $k=0$. 
\end{proof}

\begin{proof}[Proof of \thref{thm:bounds}]
We start with the upper bound. It follows from \thref{lem:iff} that $G(\alpha) - F(\alpha) >0$ if and only if $\alpha> \alpha_c$. Since $G_n \uparrow G$, if 
\begin{align} G_n(\alpha) - F(\alpha) >0\label{eq:sufficient}	
\end{align}
then so is $G(\alpha) - F(\alpha)$. Thus, if we can find a pair $n,\alpha$ satisfying \eqref{eq:sufficient}, then $\alpha$ is an upper bound on $\alpha_c$. 

Using \thref{lem:q_recursion} and \eqref{eq:qV}, it is not so taxing to write out $q_n$ for small values of $n$ by hand, but this quickly becomes intractable. A computer can calculate  $q_n$ for much larger values of $n$, but still is limited, since the number of terms and degree of the polynomials grow exponentially. Rounding error makes any estimates obtained with floating point calculations non-rigorous. 

We avoid this issue by truncating all numbers (at the 200th decimal place). Let $g_n$ be the analogue of $G_n$, but with all decimals truncated. Since every summand in the formula for $q_n$ from \thref{lem:q_recursion} and \eqref{eq:qV} are positive, truncating gives a lower bound:  $g_n(\alpha) \leq G_n(\alpha)$ for all $\alpha$. 
It only takes a few seconds to show that for $\alpha_0 = 0.08698$ we have 
$$g_{50}(\alpha_0) - F(\alpha_0) >0,$$ and thus $G(\alpha_0) - F(\alpha_0)$ is also positive. 
Thus, $\alpha_c<0.08698$. 

There is some loss of accuracy from truncation. However, being able to compute further is better than computing with perfect accuracy. For example, it took several hours to show that $G_{23}(112/1000) - F(112/100) >0$ working with rational numbers. For $n=23$ most of the fractions have millions of digits in the denominator and numerator. Still, with enough computing power, proceeding with exact calculations would give arbitrarily close upper bounds on $\alpha_c$. 

The improvement to the lower bound uses a similar idea as in \cite[Theorem 3.5]{tree}. The authors show that if $X=\infty$ then there is an infinite sequence $(H_n)$ of connected subgraphs containing the root with
	\begin{enumerate}[label = (\alph*)]
		\item $|H_n| = n$ and
		\item there are at least $\lceil n/2\rceil $ vertices in $v\in H_n$ with $\eta_v =2$.
	\end{enumerate}
	The number of such subgraphs of size $n$ is counted by the $n$th Catalan number which is bounded by $4^n$. 
	
	The number of vertices with $\eta_v =2$ in a subgraph of size $n$ has the binomial distribution with parameters $n$ and $p = \alpha/2$. Thus, the probability of such a subgraph containing $k> n/2$ vertices with cars initially arriving to them is 
	$$\sum_{k>n/2} \binom{n}k p^k(1-p)^{n-k} \leq 2^n \sum_{k>n/2} p^{k} (1-p)^{n-k}.$$
	Above we use the fact that $\binom{n}{ k} \leq \binom{n}{\lceil n/2\rceil}$ for all $k \geq \lceil n/2 \rceil$. If $p< 1/2$, then $$\sum_{k > n/2} p^k (1-p)^{n-k}=  p^{\lceil n/2\rceil }(1-p)^{n-\lceil n/2\rceil } \sum_{k=0}^{n/2} \left(\f{p}{1-p}\right)^k  \leq  2C p^{n/2} (1-p)^{n/2}$$
	for $C = \sum_{k=0}^\infty (p / (1-p))^k = (1-p)/(1-2p)$. The `2' coefficient is to correct for the periodicity coming from the $(1-p)^{n - \lceil n/2\rceil}$ term. 
	
	Applying a union bound tells us that the probability that the sequence $(H_n)$ exists is bounded by
	$$\sum_{n=1}^\infty 4^n 2^n (2C p^{n/2} (1-p)^{n/2} )= 2C \sum_{n=1}^\infty (64 p (1-p) )^{n/2}.$$
	For $p < \f 12 - \f{\sqrt{15}}{8} \approx  0.015877$ the term $64p(1-p) < 1$ and the series is summable. It follows from the Borel-Cantelli lemma that there is no infinite sequence $(H_n)$ with the required properties, and thus $P(X=\infty) = 0$. Switching back to $\alpha = 2p$, this gives $\alpha_c > 2 (.015877) = .031754$. 

\end{proof}

\begin{remark}
The improvement to the lower bound in \cite[Theorem 3.5]{tree} is the $(1-p)^{n/2}$ term. The authors did not optimize to include it and instead of finding $p$ satisfying $64p(1-p) <1$, they required that $64p <1$. This gives their lower bound of $1/64$. 
\end{remark}

\subsection{Asymptotic behavior of $p_c(d)$}

\begin{proof}[Proof of \thref{thm:pcd}]
Both bounds are straightforward generalizations of \cite[Theorem 3.5]{tree}. For the upper bound, the idea is to compare to percolation that considers the $d^2$ vertices at distance two from the root. If one of these vertices has $\eta_v =2$, then it and its ancestor will be parked in. When $\alpha /2 > d^{-2}$, basic percolation theory tells us that there is almost surely an infinite connected path of occupied parking spots. As observed by Goldschmidt and Przykucki, the odd generations of the tree along this path almost surely have infinitely many cars arrive to them, of which infinitely many will reach the root since the spots on the path are parked in and the start of the path is some finite distance from $\rho$. It follows that $X=\infty$ almost surely.  Thus, $\alpha_c(d) \leq 2d^{-2}$. 

The lower bound follows the argument in the proof of \thref{thm:bounds} concerning the existence of a sequence of subgraphs $(H_n)$. The only modification needed is that on $\mathbb T_d$ the number of connected subgraphs containing the root with $n$ vertices is equal to the generalized Catalan number (see \cite{hilton1991catalan}) which is bounded by $\binom{dn}n$. Using a standard upper bound on binomial coefficients, we have 
$$\binom{dn}{n} \leq \left(\f{dne}{n}\right)^n = (ed)^n.$$
Thus, we can replace $4^n$ with $(ed)^n$ when applying a union bound. The rest of the quantities are unchanged. So, again letting $p = \alpha/2$, we have $X<\infty$ almost surely whenever 
$$(de)^n 2^n p^{n/2}(1-p)^{n/2} < 1.$$
Bounding the $(1-p)^{n/2}$ term by $1$, solving for $p$, then making the replacement $\alpha/2=p$ gives the claimed lower bound.  
\end{proof}

We conclude this section by showing that the value of $\alpha_c$ depends on how concentrated the arrival distribution is.

\begin{proof}[Proof of \thref{prop:different}]
	As in the proof of the upper bound in \thref{thm:pcd} we compare to percolation. Now that $\eta(\alpha) = 3$ with probability $\alpha/3$ we can consider the vertices at distance $3$ from the root. Whenever $\alpha /3 \geq d^{-3}$ there is almost surely an infinite connected path of spots that are parked in, which, by similar reasoning as before, implies that $X= \infty$ almost surely. Thus, $\alpha'_c \leq 3 d^{-3}$. 
\end{proof}

\section{The increasing convex order} \label{sec:icx}

\begin{proof}[Proof of \thref{thm:icx}]
Let $X_n$ and $X_n'$ be the number of arrivals to $\rho$ up to time $n$ for parking with arrival distributions $\eta$ and $\eta'$ as in \thref{prop:Xn}. We claim that it suffices to prove that 
	\begin{align}X_n \icx X_n' \text{  for all $n \geq 0$. } \label{eq:XicxX'}
	\end{align}
Suppose we show \eqref{eq:XicxX'}. It follows from the closure under sequences property \cite[Theorem 4.A.8.(c)]{SS} that, whenever $EX$ and $EX'$ are finite, we have $X \icx X'$. On the other hand, if $EX$ is infinite, then \eqref{eq:XicxX'} implies that $EX_n \leq EX_n'$ and thus $EX'$ is also infinite. By \cite[Theorem 3.4]{tree}, this implies that $P(X_n = \infty) =1= P(X_n'=\infty)$. So trivially we have $X\icx X'$.

To establish \eqref{eq:XicxX'}, we proceed inductively. By hypothesis we have
$$X_0 = \eta_\rho \icx \eta'_\rho = X_0'.$$
Now, supposing that $X_n \icx X_n'$. Since $\phi(x) = (x-1)^+$ is an increasing convex function on $[0,\infty)$, it follows from our inductive hypothesis and \cite[Theorem 4.A.8.(a)]{SS} that 
\begin{align}
(X_n-1)^+ \icx (X_n'-1)^+. \label{eq:+}	
\end{align}
Applying \eqref{eq:+} along with \cite[Theorem 4.A.9.]{SS} for random sums of i.i.d.\ random variables whose respective summands are dominated in the increasing convex order gives
\begin{align}\sum_{i=1}^Z (X_n^{(i)} -1)^+ \icx \sum_{i=1}^Z ( (X_n^{(i)})' - 1)^+\label{eq:icx_sum}.
\end{align}
Also, since $\eta_\rho \preceq \eta_\root'$, \cite[Theorem 4.A.8.(d)]{SS} and \eqref{eq:icx_sum} imply that
\begin{align}
\eta_\root + \sum_{i=1}^Z (X_n^{(i)} -1)^+ \icx \eta_\root'+ \sum_{i=1}^Z ( (X_n^{(i)})' - 1)^+\label{eq:icx_sum2}.
\end{align}
The left and right formulas in \eqref{eq:icx_sum2} are exactly the recursive equations for $X_{n+1}$ and $X_{n+1}'$ as in \eqref{eq:X_rec}. This gives \eqref{eq:XicxX'}, which concludes the argument.
\end{proof}

\begin{proof}[Proof of \thref{cor:max}]
Let $\eta := \eta(\alpha)$ be the car arrival distribution for Bernoulli parking. By \thref{thm:icx}, it suffices to prove for fixed $\alpha$ that $\eta\icx \eta'(\alpha):= \eta'$. This follows from a straightforward adaptation of \cite[Proposition 15 (b)]{JJ}.

 Let $\phi$ be an increasing convex function on $[0,\infty)$ with $\phi(0) = 0$. The last assumption is without loss of generality. Indeed, if we prove that $E\phi(\eta) \leq E \phi(\eta')$, then for an arbitrary increasing convex function $\psi$ we define $\bar \psi(x) = \psi(x) - \psi(0)$. It is easy to see that $E \bar \psi(\eta') \leq E\bar  \psi(\eta)$ if and only if $E \psi(\eta') \leq E \psi(\eta)$. 
   
First, we have
  \begin{align}
    E\phi(\eta) = (\alpha/2) \phi(2). \label{eq:Eeta}
  \end{align}
As for $\eta'$, let $a=E[ \eta' \mid \eta' \geq 2]$. Since $\phi(0) = 0$ and $P(\eta' =1) = 0$, we can condition and apply Jensen's inequality
 \begin{align}
 E \phi(\eta') &= E[\phi(\eta') \mid \eta' \geq 2]P(\eta'\geq 2) \\
 			  & \geq \phi( a) P(\eta'\geq 2). \label{eq:jensen}
 \end{align}
As $a \geq 2$ and $\phi$ is convex, the point $(a,\psi(a))$ lies above the secant line connecting $(0,0)$ and $(2,\phi(2))$. It follows that $a \phi(2)/2 \leq \phi(a)$. Applying this to \eqref{eq:jensen} gives  
$$E \phi(\eta') \geq a \f{\phi(2)}2 P(\eta' \geq 1).$$
Notice that $a P(\eta' \geq 1) = E \eta' = \alpha$, so we have $E \phi(\eta') \geq (\alpha/2) \phi(2) = E \phi(\eta)$ by \eqref{eq:Eeta}. Thus, $\eta \icx \eta'$. 
%
\end{proof}

\subsection*{Acknowledgements} 
 Thanks to Hanbaek Lyu and Olivier H\'enard for helpful feedback.

%
%
%
%
%
%
%

\bibliographystyle{amsalpha}
\bibliography{tree_parking}

\end{document}